\documentclass[english]{article}
\usepackage{preamble}
\usepackage{quiver}
\usepackage{ amssymb }
\usepackage{ wasysym }

\crefname{thm}{Theorem}{Theorems}
\crefname{lem}{Lemma}{Lemmas}
\crefname{prop}{Proposition}{Propositions}
\crefname{cor}{Corollary}{Corollaries}
\crefname{defn}{Definition}{Definitions}
\crefname{rem}{Remark}{Remarks}
\Crefname{thm}{Theorem}{Theorems}
\Crefname{lem}{Lemma}{Lemmas}
\Crefname{prop}{Proposition}{Propositions}
\Crefname{cor}{Corollary}{Corollaries}
\Crefname{defn}{Definition}{Definitions}
\Crefname{rem}{Remark}{Remarks}




\begin{document}

\title{Cofinality via Weighted Colimits}

\begin{titlepage}
    \maketitle

    \begin{abstract}
        We prove a refinement of Quillen's Theorem A, providing necessary and sufficient conditions for a functor to be cofinal with respect to diagrams valued in a fixed $\infty$-category. 
        We deduce this from a general duality phenomenon for weighted colimits, which is of independent interest.
        As a sample application, due to Betts and Dan-Cohen, we describe a simplified formula for the free $\EE_\infty$-algebra on an $\EE_0$-algebra in a stable rational $\infty$-category .  
    \end{abstract}

    \begin{figure}[h]
        \centering
        \captionsetup{justification=centering}

        \includegraphics[width=0.5\linewidth]{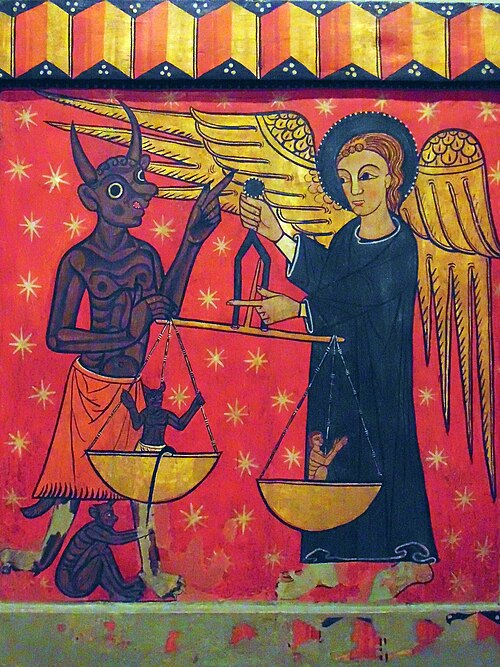}
        \caption*{\scriptsize{Judgement of the Soul (Judici particular de l'\'anima), 13th century, Catalonia. \\Photo by Manel Zaera, via Wikimedia Commons, licensed under CC BY 2.0}}
    \end{figure}
\end{titlepage}

\tableofcontents{}

\section{Introduction}

It is a fundamental fact of category theory that the colimit of a diagram depends functorially  on its \textit{shape}. Namely, given a functor $X\colon I \to \cC$ viewed as an $I$-shaped diagram in $\cC$, and a functor $f\colon J\to I$, viewed as a re-parametrization of the shape of the diagram $X$, there is a canonical comparison map on colimits,
\[
    \alpha \colon \colim_J (X\circ f) \too \colim_I X \qin \cC.
\]
It is clear that when $f$ is an equivalence of categories, $\alpha$ is an isomorphism for all $\cC$ and $X$. Interestingly, this may be the case even when $f$ is not an equivalence of categories. For example, if $J:= \pt \to I$ picks the terminal object of $I$. Such functors are called \textit{cofinal}.\footnote{Or `left cofinal', `final' or `terminal', depending on the author.} 

While the definition of cofinality for $f\colon J\to I$ quantifies over all $I$-shaped diagrams in all possible categories $\cC$, there is a classical criterion for cofinality in terms of the \textit{lax fibers} (i.e.\ comma categories) of $f$. These are defined for each $a \in I$ by the following pullback diagram of categories:
\[\begin{tikzcd}
	{J_{a/}} & {I_{a/}} \\
	J & I.
	\arrow[from=1-1, to=1-2]
	\arrow[from=1-1, to=2-1]
	\arrow["\lrcorner"{anchor=center, pos=0.125}, draw=none, from=1-1, to=2-2]
	\arrow[from=1-2, to=2-2]
	\arrow["f", from=2-1, to=2-2]
\end{tikzcd}\]
Explicitly, an object of $J_{a/}$ is an object $b$ of $J$ together with a map $a\to f(b)$ in $I$, and a morphism between two such is a morphism $b_1 \to b_2$ in $J$ such that the obvious triangle in $I$ commutes: 
\[\begin{tikzcd}
	& a \\
	{f(b_1)} && {f(b_2).}
	\arrow[from=1-2, to=2-1]
	\arrow[from=1-2, to=2-3]
	\arrow[from=2-1, to=2-3]
\end{tikzcd}\]

\begin{thm}\label[thm]{Classical_Cof_Crit}
    A functor $f\colon J\to I$ is cofinal if and only if $J_{a/}$ is connected and non-empty for all $a \in I$.\footnote{We are not aware of the origin of this theorem, but an enriched version seems to have appeared first in \cite{kelly64enriched}.}
\end{thm}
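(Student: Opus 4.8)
The plan is to reduce the statement to the universal property of the colimit via the Yoneda lemma, turning cofinality of $f$ into a bijection between cocones. For any $c \in \cC$ one has $\mathrm{Hom}_\cC(\colim_I X, c) \cong \mathrm{Cocone}(X,c)$, the set of natural transformations $X \Rightarrow \mathrm{const}_c$, and under this identification the precomposition map $\alpha^*\colon \mathrm{Hom}_\cC(\colim_I X, c) \to \mathrm{Hom}_\cC(\colim_J (X\circ f), c)$ is exactly restriction of cocones along $f$ (since $\alpha$ is compatible with the structure inclusions). Hence, whenever the relevant colimits exist, $f$ is cofinal if and only if for every $X\colon I \to \cC$ and every $c$ the restriction $f^*\colon \mathrm{Cocone}(X,c) \to \mathrm{Cocone}(X\circ f, c)$ is a bijection; by Yoneda this also lets one transfer existence of one colimit to the other.

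For the ``only if'' direction I would test against the corepresentable diagrams. Fix $a \in I$ and take $X = \mathrm{Hom}_I(a,-)\colon I \to \mathrm{Set}$. Since $I_{a/}$ has an initial object, $\colim_I \mathrm{Hom}_I(a,-) \cong \pt$, whereas unwinding the colimit of sets gives $\colim_J \mathrm{Hom}_I(a, f(-)) \cong \pi_0(J_{a/})$: its elements are the pairs $(b, a\to f(b))$, i.e.\ the objects of $J_{a/}$, modulo the relation imposed by the morphisms of $J$, which are precisely the morphisms of $J_{a/}$. Under these identifications $\alpha$ becomes the canonical map $\pi_0(J_{a/}) \to \pt$, so cofinality of $f$ forces $\pi_0(J_{a/})$ to be a single point, i.e.\ $J_{a/}$ connected and nonempty.

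For the ``if'' direction I would check that $f^*$ on cocones is a bijection. Injectivity uses only nonemptiness: given $a$, pick any $(b,\phi\colon a\to f(b)) \in J_{a/}$, and naturality forces $\lambda_a = \lambda_{f(b)}\circ X(\phi)$, which is determined by $f^*\lambda$. For surjectivity, given a cocone $\nu$ under $X\circ f$, I would set $\lambda_a := \nu_b \circ X(\phi)$ for a chosen $(b,\phi)\in J_{a/}$, then verify (a) independence of the choice, (b) naturality of $\lambda$, and (c) $f^*\lambda = \nu$, the last by evaluating at the object $(b,\mathrm{id}_{f(b)})$.

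The crux, and the step I expect to be the main obstacle, is the well-definedness in (a), which is exactly where connectedness enters. Connectedness reduces the check to invariance along a single morphism $g\colon (b,\phi) \to (b',\phi')$ of $J_{a/}$, that is $g\colon b\to b'$ in $J$ with $f(g)\circ\phi = \phi'$; then $\nu_{b'}\circ X(\phi') = \nu_{b'}\circ X(f(g))\circ X(\phi) = \nu_b\circ X(\phi)$ by the cocone relation for $\nu$, so the value is unchanged along each edge, and hence along the zigzags supplied by connectedness. Points (b) and (c) are then short diagram chases, and Yoneda converts the resulting natural bijection into the assertion that $\alpha$ is an isomorphism, completing the proof.
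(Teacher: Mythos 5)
Your proof is correct, and it takes a genuinely different route from the paper: the paper never proves \Cref{Classical_Cof_Crit} directly (it is cited as classical, with a footnote tracing an enriched version to Kelly), but rather recovers it as the $1$-categorical specialization of \Cref{Local_Cof_Crit}, which is itself deduced from the weighted-colimit identity $\colim_J (X\circ f) \simeq \colim_I^{|J_{(-)/}|} X$ of \Cref{Cof_Quant}: for an ordinary category $\cC$ the mapping sets are $0$-truncated, so any connected nonempty space is $\cC$-acyclic, while conversely for $\cC = \mathrm{Set}$ one has $A \otimes X \cong \pi_0(A)\times X$, so acyclicity forces $\pi_0(A) = \pt$. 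Your argument is the elementary, self-contained one: the ``only if'' direction via the corepresentable test diagrams $\mathrm{Hom}_I(a,-)$ (whose colimit over $I$ is $\pt$ and over $J$ is $\pi_0(J_{a/})$, both correctly computed as $\pi_0$ of categories of elements) is in fact the shadow of the paper's converse argument in \Cref{Local_Cof_Crit}, where the test diagrams $a_! X$ with $(a_!X)(b) \simeq \Map(a,b)\otimes X$ reduce to $\mathrm{Hom}_I(a,-)$ when $\cC = \mathrm{Set}$ and $X = \pt$; and the ``if'' direction via extending cocones along chosen objects of $J_{a/}$, with well-definedness checked along zigzags, is sound, including the correct observation that injectivity needs only nonemptiness while surjectivity needs connectedness. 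What each approach buys: yours is short and needs no machinery, whereas the paper's yields the quantitative formula and the $\cC$-relative refinement uniformly. One point worth making explicit is that your zigzag step is precisely what does \emph{not} survive the passage to $\infty$-categories: there a cocone is coherence data rather than a condition, so invariance along each edge of a zigzag no longer suffices, and connectedness of $J_{a/}$ must be upgraded to contractibility of $|J_{a/}|$ as in \Cref{Homotopy_Cof_Crit} --- your proof is thus intrinsically $1$-categorical, which is exactly right for the statement at hand but also explains why the paper routes through weighted colimits for the general result.
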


In his work on algebraic $K$-theory, Quillen used a homotopical variant of the above criterion in the formulation of his celebrated `Theorem A'.

\begin{thm}[Quillen {\cite[Theorem A]{quillen2006higher}}]
    Let $f\colon J \to I$ be a functor. The induced map of simplicial sets 
    \[
        N(f)\colon N(J)\too N(I)
    \]
    is a weak homotopy equivalence if for all $a\in I$, the simplicial set $N(J_{a/})$ is weakly contractible. 
\end{thm}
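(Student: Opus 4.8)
The plan is to read this statement as the homotopical, $\pi_0$-insensitive incarnation of a cofinality assertion and to deduce it from the comparison map on colimits from the introduction. Let $\mathcal{S}$ denote the $\infty$-category of spaces and recall that the classifying space of a category — the geometric realization of its nerve — is computed as the colimit of the constant point-valued diagram: $|I| \simeq \colim_I \mathbf{1}$ for $\mathbf{1}\colon I \to \mathcal{S}$, and similarly for $J$. A map of simplicial sets is a weak homotopy equivalence precisely when it becomes an equivalence after realization, so the claim that $N(f)$ is a weak homotopy equivalence is equivalent to the claim that the comparison map
\[
    \alpha\colon \colim_J \mathbf{1} \too \colim_I \mathbf{1} \qin \mathcal{S}
\]
is an equivalence; here $\mathbf{1}\circ f = \mathbf{1}$, so $\alpha$ is exactly the introductory map for the constant, space-valued diagram. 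It therefore suffices to show that $\alpha$ is an equivalence, and I would in fact prove the stronger statement that $f$ is cofinal for all $\mathcal{S}$-valued diagrams.

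The key step is to rewrite the restricted colimit as a weighted colimit over $I$. For any $X\colon I \to \cC$ one has a natural identification $\colim_J (X\circ f) \simeq \colim^{W}_I X$, where $W := \mathrm{Lan}_{f^{\mathrm{op}}}\mathbf{1}\colon I^{\mathrm{op}} \to \mathcal{S}$ is the left Kan extension of the constant weight on $J$ along $f^{\mathrm{op}}$, and under this identification $\alpha$ is induced by the canonical map of weights $\varepsilon\colon W \to \mathbf{1}$. The pointwise formula for this Kan extension evaluates the weight at $a \in I$ as the classifying space of the relevant comma category,
\[
    W(a) \;\simeq\; \bigl| J_{a/} \bigr| \;=\; \bigl| J\times_I I_{a/} \bigr|,
\]
the very lax fibers appearing in the statement. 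Thus the hypothesis that each $N(J_{a/})$ is weakly contractible says exactly that $\varepsilon\colon W \to \mathbf{1}$ is an equivalence of weights. Since weighted colimits are functorial and send levelwise equivalences of weights to equivalences, $\alpha = \colim^{\varepsilon}_I X$ is then an equivalence for every $\cC$ and every $X$; specializing to $X = \mathbf{1}$ gives the desired equivalence $|J|\simeq|I|$, and hence that $N(f)$ is a weak homotopy equivalence.

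This also clarifies how Theorem A refines the classical criterion of \Cref{Classical_Cof_Crit}. That criterion detects cofinality against ordinary $1$-categories, where colimits only remember $\pi_0$ of the indexing data, and correspondingly asks only that $\pi_0 W(a) = \pi_0|J_{a/}|$ be a point — i.e.\ that $J_{a/}$ be connected and non-empty. Passing to the non-truncated target $\mathcal{S}$ forces one to ask that the entire weight $W(a)\simeq|J_{a/}|$ be contractible, which is precisely the strengthening from ``connected and non-empty'' to ``weakly contractible''.

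The main obstacle is the weighted-colimit identity $\colim_J(X\circ f)\simeq \colim^{W}_I X$ together with the pointwise evaluation $W(a)\simeq|J_{a/}|$, and the compatibility asserting that the abstract comparison of weighted colimits really recovers the canonical map $\alpha$. Establishing these — essentially a duality exchanging the reparametrization $f$ for a reweighting of the colimit over $I$ — is exactly the general phenomenon the paper isolates, and it is where the genuine $\infty$-categorical content lives; once it is in hand, the vanishing of the obstruction is the formal observation that a levelwise-contractible weight is terminal. As a cross-check I would keep Quillen's original homotopy-fibre argument (Theorem B) in reserve: it identifies the homotopy fibres of $|f|$ over the components of $|I|$ with the classifying spaces $|J_{a/}|$, recovering the same conclusion via the long exact sequence of homotopy groups.
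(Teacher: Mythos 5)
Your argument is correct and is essentially the paper's own route: your weighted-colimit identity $\colim_J(X\circ f)\simeq\colim_I^{W}X$ with $W=f^{\op}_!\pt_J$ and the pointwise evaluation $W(a)\simeq|J_{a/}|$ are exactly \Cref{Cof_Quant}, your ``levelwise-contractible weight'' step is the acyclicity argument of \Cref{Local_Cof_Crit} specialized to $\cC=\Spc$, and the final specialization to the constant diagram on $\pt$ is precisely how the paper deduces Quillen's Theorem A from the cofinality criterion. The one point you flag as needing care --- that the abstract identification of weighted colimits recovers the canonical comparison map $\alpha$ --- is likewise glossed in the paper's own proof, so you are not missing anything the paper supplies.
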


It was subsequently realized that this result follows from a direct $\infty$-categorical analogue of \Cref{Classical_Cof_Crit}. The notions of diagrams, colimits, cofinality, lax fibers etc.\!\! extend naturally to $\infty$-categories. Moreover, ``weak contractibility'' for an $\infty$-category $I$ has a natural intrinsic interpretation. 
It means that the $\infty$-groupoidification $|I|$ is contractible (i.e.,\ equivalent to the trivial one point groupoid).\footnote{In the spirit of the homotopy hypothesis, we conflate $\infty$-groupoids with spaces.} With these $\infty$-categorical adaptations, we have the following:

\begin{thm}[{Joyal \cite{joyal2008theory}, Lurie \cite[Theorem 4.1.3.1]{HTT}}]\label[thm]{Homotopy_Cof_Crit}
    A functor $f\colon J \to I$ of $\infty$-categories is cofinal if and only if $|J_{a/}| \simeq \pt$ for all $a \in I$.
\end{thm}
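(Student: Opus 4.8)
The plan is to trade the unwieldy quantifier over all target $\infty$-categories $\cC$ for a condition about space-valued diagrams, and then to recognize that condition as a single left Kan extension whose pointwise value is the groupoidification of the lax fiber. Throughout, $\mathcal{S}$ denotes the $\infty$-category of spaces and $\mathcal{P}(I) = \mathrm{Fun}(I^{\mathrm{op}},\mathcal{S})$. As orientation — and to dispatch the easy implication — I would fix $a\in I$ and feed the corepresented diagram $h^a = \mathrm{Map}_I(a,-)\colon I\to\mathcal{S}$ into the definition of cofinality. Its covariant unstraightening is the representable left fibration $I_{a/}\to I$, and since the colimit of a space-valued diagram is the groupoidification of its total space, $\colim_I h^a\simeq|I_{a/}|\simeq\pt$ (as $I_{a/}$ has the initial object $\mathrm{id}_a$). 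Pulling back along $f$ turns $I_{a/}$ into $J_{a/}$, so $\colim_J(h^a\circ f)\simeq|J_{a/}|$; thus cofinality already forces $|J_{a/}|\simeq\pt$, which pins down the asserted condition.

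For the substantive direction I would pass to mapping spaces. Recall that $f$ is cofinal precisely when it preserves colimit cocones, and that a cocone $\bar X\colon I^{\triangleright}\to\cC$ with cone point $c$ is a colimit cocone if and only if $\mathrm{Map}_\cC(c,d)\to\lim_{a\in I^{\mathrm{op}}}\mathrm{Map}_\cC(\bar X(a),d)$ is an equivalence for every $d\in\cC$. Applying this to $\bar X$ and to its restriction along $f^{\triangleright}$ shows that cofinality is equivalent to the following space-level condition $(\ast)$: for every $G\colon I^{\mathrm{op}}\to\mathcal{S}$, the restriction map $\lim_{I^{\mathrm{op}}}G\to\lim_{J^{\mathrm{op}}}(G\circ f^{\mathrm{op}})$ is an equivalence. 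The test functors $\mathrm{Map}_\cC(\bar X(-),d)$ range over all of $\mathcal{P}(I)$, as one sees by taking $\cC=\mathcal{P}(I)$ with $\bar X$ the Yoneda embedding, where $\mathrm{Map}_{\mathcal{P}(I)}(y(a),G)\simeq G(a)$; this secures both directions of the equivalence with $(\ast)$.

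It then remains to identify $(\ast)$ with the lax-fiber condition. Writing $\lim_{I^{\mathrm{op}}}G\simeq\mathrm{Map}_{\mathcal{P}(I)}(\pt_I,G)$ and using the adjunction $\mathrm{Lan}_{f^{\mathrm{op}}}\dashv (f^{\mathrm{op}})^{*}$ to rewrite $\lim_{J^{\mathrm{op}}}(G\circ f^{\mathrm{op}})\simeq\mathrm{Map}_{\mathcal{P}(I)}(\mathrm{Lan}_{f^{\mathrm{op}}}\pt_J,G)$, the map in $(\ast)$ becomes $\mathrm{Map}_{\mathcal{P}(I)}(-,G)$ applied to the canonical map $\mathrm{Lan}_{f^{\mathrm{op}}}\pt_J\to\pt_I$ into the terminal presheaf. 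By Yoneda, $(\ast)$ holds for all $G$ if and only if this single map of presheaves is an equivalence; and the pointwise formula for left Kan extensions computes its value at $a$ as the classifying space of the comma $\infty$-category $f^{\mathrm{op}}/a$, which is equivalent to $|J_{a/}|$. Hence the map is an equivalence exactly when $|J_{a/}|\simeq\pt$ for all $a$, closing the chain of equivalences — with the ``terminal weight'' $\pt$ and the lax fiber emerging as a Kan extension, in keeping with the weighted-colimit viewpoint.

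I expect the reduction of the second paragraph — replacing ``all $\cC$'' by the space-level condition $(\ast)$ — to be the main obstacle, since this is where the definition is rendered computable. The care required is twofold: phrasing everything through colimit cocones so as not to presuppose that the colimits exist, and verifying that the mapping-space comparison is natural enough to be identified with $(\ast)$ on the nose. By contrast, the closing Kan extension computation is routine once $f^{\mathrm{op}}/a$ is unwound. An alternative would prove the space-valued case directly by density — every $G$ is a colimit of corepresentables and both sides preserve colimits in $G$ — but one must then separately bridge from $\mathcal{S}$ to arbitrary $\cC$; the mapping-space reduction accomplishes this in a single step and sidesteps the colimit/limit variance mismatch that the density route introduces.
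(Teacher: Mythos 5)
The paper does not actually prove this theorem---it is quoted from Joyal and Lurie as background---so there is no internal proof to compare against; judged on its own, your argument is correct in outline, and it is in fact the limit-side mirror of the machinery this paper develops for its generalization. Your reduction of condition $(\ast)$, via the adjunction $f^{\op}_! \dashv (f^{\op})^*$, to the single presheaf map $f^{\op}_!\pt_J \to \pt_I$, together with the pointwise computation $(f^{\op}_!\pt_J)(a) \simeq |J_{a/}|$, is precisely the content of \Cref{Cof_Quant}; and your Yoneda test---feeding the diagram $y \colon I \to \Psh(I)$ into cofinality so that the test presheaves $\Map_\cC(\bar X(-),d)$ exhaust $\Psh(I)$---plays the role that the diagrams $a_!X$ and the presentable envelope play in the converse direction of \Cref{Local_Cof_Crit}. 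By contrast, the cited proofs of Joyal and Lurie are quasi-categorical, running through (un)straightening and a simplicial reduction in the spirit of Quillen's Theorem A; your route buys model-independence at the cost of taking the mapping-space criterion for colimits and the pointwise Kan extension formula (\Cref{Lan_Ptw}) as inputs, which is a reasonable trade.

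One point you flag but do not execute deserves to be made explicit: under the paper's convention that cofinality includes transfer of existence in \emph{both} directions, the implication $(\ast) \Rightarrow \text{cofinal}$ requires an argument when only $\colim_J (X \circ f)$ is assumed to exist, since a colimit cocone under $X \circ f$ does not obviously extend to a cocone under $X$, so the cocone-wise criterion cannot be applied directly. The gap is fillable by corepresentability: if $c$ is a colimit of $X \circ f$, then for every $d \in \cC$ one has equivalences, natural in $d$,
\[
    \Map_\cC(c,d) \simeq
    \lim_{J^{\op}}\Map_\cC(X(f(-)),d) \simeq
    \lim_{I^{\op}}\Map_\cC(X(-),d) \simeq
    \Map_{\Fun(I,\cC)}(X,\Delta d),
\]
the middle one being the inverse of the restriction equivalence supplied by $(\ast)$; thus $c$ corepresents cocones under $X$, and by Yoneda the identity of $c$ furnishes a cocone under $X$ that is then colimiting. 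With that inserted, your chain of equivalences closes.
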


Quillen's original Theorem A follows from the above by considering the constant $I$-shaped diagram $I \to \Spc$ on the terminal object $\pt \in \Spc$ where $\Spc$ is the $\infty$-category of spaces (equivalently, $\infty$-groupoids).  

Thus, for a functor $f\colon J \to I$ of ordinary categories, we have two different notions of cofinality: 
\begin{enumerate}
    \item ``classical cofinality'' with respect to diagrams in ordinary categories.
    \item ``homotopy cofinality'' with respect to diagrams in all $\infty$-categories.
\end{enumerate}
In particular, \Cref{Classical_Cof_Crit} and \Cref{Homotopy_Cof_Crit} are incomparable, none of which implies the other in an obvious way. Nevertheless, for $f$ to be cofinal for $\cC$-valued diagrams, intuition suggests that it suffices for the spaces $|J_{a/}|$ to ``appear contractible'' for the specific $\infty$-category $\cC$. So, for example, if $\cC$ is an ordinary category, its mapping spaces are 0-truncated and hence cannot distinguish between `contractible' and merely `connected'. 

The goal of this note is to make the above intuition precise. We begin with some definitions.

\begin{defn}
    Let $\cC$ be an $\infty$-category. A functor $f\colon J \to I$ of $\infty$-categories is called \textit{$\cC$-cofinal} if for all diagrams $X\colon I \to \cC$, the induced comparison map is an isomorphism\footnote{We adopt the convention that an isomorphism of colimits implies, in particular, that one exists if and only if the other exists, but does not assert the existence of the colimits in general.}
    \[
        \colim_J (X\circ f) \iso \colim_I X \qin \cC.
    \]
\end{defn}

Next, we want to formalize the idea that a space $A$ appears to be contractible in the eyes of a specific $\infty$-category $\cC$.

\begin{defn}
    Let $\cC$ be an $\infty$-category and $A$ a space (i.e.,\ $\infty$-groupoid). 
    \begin{enumerate}
        \item For $X \in \cC$, we denote by $A \otimes X$ the colimit in $\cC$ of the $A$-shaped constant diagram on $X$.
        \item We say that $A$ is \textit{$\cC$-acyclic} if for every $X \in \cC$ the constant colimit $A \otimes X$ exists and the map $A \to \pt$ induces an isomorphism $A \otimes X \iso X$.
    \end{enumerate}
\end{defn}

\begin{rem}
    We have chosen the ``homological'' variant of acyclicity. But we could just as well consider the ``cohomological'' variant by asking the map $A\to \pt$ to induce an isomorphism on limits $Y \iso Y^A$ for all $Y \in \cC$. The two variants are equivalent, as by the Yoneda lemma both coincide with having canonical isomorphisms
    \[
        \Map(X,Y) \iso \Map(X,Y)^A \qin \Spc,
    \]
    for all $X,Y\in \sC$.
\end{rem}

With these definitions, we can state our first main result.

\begin{theorem}[\Cref{Local_Cof_Crit}]\label[thm]{Local_Cof_Crit_Intro}
    Let $\cC$ be an $\infty$-category. A functor $f \colon J \to I$ of $\infty$-categories is $\cC$-cofinal if $|J_{a/}|$ is $\cC$-acyclic for all $a \in I$. The converse holds provided $\cC$ admits enough colimits.\footnote{We refer to \Cref{Local_Cof_Crit} for the precise condition.}
\end{theorem}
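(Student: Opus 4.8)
The plan is to reformulate $\cC$-cofinality as the assertion that a single map of weights induces an isomorphism on weighted colimits, and then to verify this by testing against mapping spaces, where the $\cC$-acyclicity hypothesis applies objectwise. First I would record the weighted description of the comparison map. Writing $y\colon I \to \mathrm{Fun}(I^{op},\Spc)$ for the Yoneda embedding, the co-Yoneda lemma gives $X(f(j)) \simeq \colim^{y_{f(j)}}_I X$, and cocontinuity of weighted colimits in the weight yields
\[ \colim_J(X\circ f) \;\simeq\; \colim^{W_f}_I X, \qquad W_f := \colim_{j\in J} y_{f(j)} \simeq f_!\mathbf 1_J, \]
where $f_! \dashv f^*$ is the left-Kan/restriction adjunction on presheaves. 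Evaluating, $W_f(a) = \colim_{j\in J}\Map_I(a,f(j)) \simeq |J_{a/}|$, so $W_f$ is the comma weight. Since the under-categories $I_{b/}$ have initial objects, the terminal presheaf satisfies $\mathbf 1 \simeq \colim_{a\in I} y_a$, hence $\colim^{\mathbf 1}_I X \simeq \colim_I X$; the canonical map $\epsilon\colon W_f \to \mathbf 1$ then induces exactly the comparison map $\alpha$. Thus $f$ is $\cC$-cofinal if and only if $\epsilon$ induces an isomorphism $\colim^{W_f}_I X \iso \colim^{\mathbf 1}_I X$ for every $X$.

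For the sufficiency direction I would avoid any existence hypothesis on $\cC$ by testing against an arbitrary $Y\in\cC$. With $P := \Map_\cC(X(-),Y) \in \mathrm{Fun}(I^{op},\Spc)$, the adjunction $f_!\dashv f^*$ identifies the comparison map, after applying $\Map_\cC(-,Y)$, with the precomposition map
\[ \epsilon^*\colon \Map_{\mathrm{Fun}(I^{op},\Spc)}(\mathbf 1, P) \too \Map_{\mathrm{Fun}(I^{op},\Spc)}(W_f, P). \]
Because these two assignments are the functors $\cC \to \Spc$ corepresented by $\colim_I X$ and $\colim_J(X\circ f)$ whenever those exist, proving that $\epsilon^*$ is an equivalence for all $X,Y$ simultaneously yields both existence-matching and the isomorphism. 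Now both sides are ends, $\Map(\mathbf 1,P) \simeq \int_a P(a)$ and $\Map(W_f,P) \simeq \int_a P(a)^{|J_{a/}|}$, and $\epsilon^*$ is induced objectwise by the maps $P(a) \to P(a)^{|J_{a/}|}$ coming from $|J_{a/}| \to \pt$. By the Remark, the $\cC$-acyclicity of $|J_{a/}|$ says precisely that each such map is an equivalence when applied to the mapping space $P(a) = \Map_\cC(X(a),Y)$. An objectwise equivalence of end-diagrams is an equivalence on ends, so $\epsilon^*$ is an equivalence and $f$ is $\cC$-cofinal.

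For the converse I would feed cofinality well-chosen test diagrams, which is where ``$\cC$ admits enough colimits'' enters. Fix $a\in I$ and $Z\in\cC$, and let $X := (i_a)_! Z$ be the left Kan extension of $Z\colon\pt\to\cC$ along $i_a\colon\pt\to I$; its existence amounts to the existence of the copowers $\Map_I(a,b)\otimes Z$ for all $b$, which is the precise meaning of the hypothesis. Then $\colim_I X \simeq Z$, while
\[ \colim_J(X\circ f) \;\simeq\; \colim_{j\in J}\bigl(\Map_I(a,f(j))\otimes Z\bigr) \;\simeq\; |J_{a/}|\otimes Z, \]
and naturality identifies the comparison map with the canonical map $|J_{a/}|\otimes Z \to Z$. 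If $f$ is $\cC$-cofinal this is an isomorphism for every $Z$, which is exactly the statement that $|J_{a/}|$ is $\cC$-acyclic; letting $a$ range over $I$ gives the converse.

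The main obstacle is the sufficiency step: $\cC$-acyclicity is an objectwise condition in $a$, whereas the comparison map is a global (co)end. The device that resolves this --- and which I expect is the ``duality phenomenon'' the paper isolates as its central tool --- is to pass from the homological side (weighted colimits valued in $\cC$) to the cohomological side (ends of mapping spaces valued in $\Spc$) via the Remark, converting the pointwise hypothesis into an objectwise equivalence of end-diagrams that integrates for free. Some care is also needed to route the sufficiency argument entirely through corepresentability, so that it makes no colimit-existence assumption on $\cC$ and thereby matches the asymmetry between the two halves of the statement.
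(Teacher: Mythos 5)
Your proposal is correct, and much of its skeleton coincides with the paper's argument: your identity $\colim_J(X\circ f)\simeq\colim_I^{W_f}X$ with $W_f=f^\op_!(\pt_J)\simeq |J_{(-)/}|$ is exactly \Cref{Cof_Quant} (you derive it by co-Yoneda plus cocontinuity of weighted colimits in the weight, the paper by chasing the free-cocompletion square together with the identification $\widehat f=f^\op_!$ --- a cosmetic difference), and your converse is the paper's in substance, down to the same precise hypothesis that the copowers $\Map(a,b)\otimes Z$ exist. Where you genuinely diverge is the sufficiency direction. The paper stays on the ``homological'' side: the map of $\Tw(I)$-shaped diagrams $|J_{(-)/}|\otimes X\to \pt\otimes X$ in $\cC$ is a pointwise isomorphism by acyclicity, hence induces an isomorphism of weighted colimits; to make the weighted-colimit calculus legitimate without existence hypotheses on $\cC$, the paper first embeds $\cC$ in its presentable envelope (\Cref{Pres_Env}, \Cref{Pres_Env_Weight_Colim}). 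You instead dualize through $\Map_\cC(-,Y)$: the two cone functors become the ends $\int_a P(a)$ and $\int_a P(a)^{|J_{a/}|}$, acyclicity in its cohomological form (the remark following the definition of acyclicity) makes the comparison an objectwise equivalence of end diagrams, and corepresentability then delivers both existence-matching and the isomorphism at once. This buys you independence from the presentable envelope and from \Cref{Colim_Duality} as a stated theorem --- your argument only ever forms limits in $\Spc$ and in presheaf categories, which always exist --- at the cost of proving only the cohomological shadow of the duality at the single weight $W=\pt_J$, whereas the paper's route isolates \Cref{Colim_Duality} for arbitrary weights as a standalone result of independent interest. One point to flag in writing it up: your opening derivation of $\colim_J(X\circ f)\simeq\colim_I^{W_f}X$ (cocontinuity in the weight, $\pt_I\simeq\colim_{a\in I} y_a$) tacitly assumes $\cC$ has enough colimits; since your actual sufficiency proof reroutes everything through cone spaces, this functions as motivation rather than a gap, but you should either say so explicitly or perform the envelope reduction first, as the paper does.
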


We observe that $|J_{a/}|$ is $\cC$-acyclic precisely when $J_{a/}\to \pt$ is $\cC$-cofinal. Thus, \Cref{Local_Cof_Crit_Intro} can be informally summarized as saying that $\cC$-cofinality is a ``lax fiberwise condition''. 


    

For many naturally occurring families of $\infty$-categories, one can identify a substantial collection of acyclic spaces leading to enlightening specializations of \Cref{Local_Cof_Crit_Intro}.
\begin{enumerate}
    \item If $\cC$ is an $(n,1)$-category, namely, all mapping spaces are $(n-1)$-truncated, then every $n$-connected space is $\cC$-acyclic. This provides a natural interpolation between \Cref{Classical_Cof_Crit} and \cref{Homotopy_Cof_Crit}.
    After an initial version of this work was completed, we became aware that this result was proved in \cite{other_quillen_paper}.

    \item If $\cC$ is stable and $E$-local for a certain spectrum $E$, then every $E$-acyclic space (i.e.,\ having the $E$-homology of a point) is $\cC$-acyclic. In the special case of a stable rational $\infty$-category, this includes all connected $\pi$-finite spaces.

    \item If $\cC$ is higher semiadditive of height less than or equal to $n$, then every $\pi$-finite and $n$-connected space is $\cC$-acyclic \cite[Theorem 3.2.7]{AmbiHeight}. In particular, this includes stable $T(n)$- (and hence $K(n)$-)local $\infty$-categories, which are the higher chromatic analogues of the stable rational $\infty$-categories. 
\end{enumerate}

We further illustrate the utility of \Cref{Local_Cof_Crit_Intro} by describing an application due to Betts and Dan-Cohen to a concrete formula for the free $\EE_\infty$-algebra on a pointed object in a stable rational symmetric monoidal $\infty$-category.

\begin{corollary}[\cite{Ishai_Paper}, see \Cref{Ishai}]\label[cor]{Ishai_Intro}
    Let $\cC$ be a stable rational symmetric monoidal $\infty$-category and let $\one \to X$ be a pointed object in $\cC$ viewed as an $\EE_0$-algebra. The underlying object of the free $\EE_\infty$-algebra on the $\EE_0$-algebra $X$ is given by the sequential colimit
    \[
        \colim \big(
        \one \to X \to X^{\otimes 2}_{h\Sigma_2} \to X^{\otimes 3}_{h\Sigma_3} \to \dots \big) \qin \cC.
    \]
\end{corollary}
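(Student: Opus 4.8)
The plan is to apply the local cofinality criterion, \Cref{Local_Cof_Crit_Intro}, to a single judiciously chosen functor, reducing the free $\EE_\infty$-algebra — which is \emph{a priori} a colimit over the groupoid of finite sets $\mathrm{Fin}^{\simeq} \simeq \coprod_n B\Sigma_n$ of the symmetric powers $X^{\otimes n}_{h\Sigma_n}$ — to the displayed sequential colimit over $\NN$. First I would recall the standard description of the free $\EE_\infty$-algebra: for a pointed object $\one \to X$, regarded as an $\EE_0$-algebra, the underlying object of $\mathrm{Free}_{\EE_\infty}(X)$ is computed by a colimit indexed by a category that records both the partition data (symmetric powers) and the basepoint maps that allow one to ``stabilize'' from $X^{\otimes n}$ into $X^{\otimes n+1}$ by tensoring with $\one \to X$. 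The natural indexing category here is (a variant of) the category $\mathrm{Fin}_*$ of finite pointed sets, or equivalently Segal's category $\Gamma$, whose structure encodes exactly these two operations. The sequential diagram $\one \to X \to X^{\otimes 2}_{h\Sigma_2} \to \cdots$ is the pullback of this canonical $\mathrm{Fin}_*$-indexed diagram along a functor $f\colon \NN \to \mathrm{Fin}_*$ selecting the chain of inclusions $\{*\} \hookrightarrow \{*,1\} \hookrightarrow \{*,1,2\} \hookrightarrow \cdots$.

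The crux is then to verify the hypothesis of \Cref{Local_Cof_Crit_Intro}: that $|\NN_{a/}|$ is $\cC$-acyclic for every object $a \in \mathrm{Fin}_*$. Here is where rationality enters decisively. By item (2) in the discussion following \Cref{Local_Cof_Crit_Intro}, in a stable rational $\infty$-category every connected $\pi$-finite space is $\cC$-acyclic, since such spaces are $H\QQ$-acyclic and $\cC$ is $H\QQ$-local. So I would compute the lax fibers $\NN_{a/}$ for the functor $f$ and show that each $|\NN_{a/}|$ is a connected $\pi$-finite space. For a pointed set $a$ of cardinality $n+1$ (i.e.\ $n$ non-basepoint elements), I expect the lax fiber to have the homotopy type of the space of ways to embed the $n$ ``live'' points of $a$ into one of the sets $\{1,\dots,m\}$ for $m \geq n$, compatibly with the sequential transition maps; its realization should be a classifying-space–type object built from the symmetric groups $\Sigma_n$ acting on configurations, which is connected and $\pi$-finite. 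The main obstacle — and the step demanding genuine care — is precisely this identification of the lax fibers and the verification that they are \emph{connected} (the nontrivial half) and $\pi$-finite; connectivity is what fails over an arbitrary base $\infty$-category and is exactly what rationality is buying us.

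Granting $\cC$-acyclicity of all lax fibers, \Cref{Local_Cof_Crit_Intro} yields that $f\colon \NN \to \mathrm{Fin}_*$ is $\cC$-cofinal, and hence the comparison map
\[
    \colim_{\NN}\big(\one \to X \to X^{\otimes 2}_{h\Sigma_2} \to \cdots\big) \iso \colim_{\mathrm{Fin}_*} (\text{canonical diagram}) = \mathrm{Free}_{\EE_\infty}(X)
\]
is an isomorphism in $\cC$, which is exactly the claimed formula. Two points warrant attention. First, I must confirm that the canonical $\mathrm{Fin}_*$-indexed diagram whose colimit computes the free $\EE_\infty$-algebra really is $f$-pulled-back to the stated sequential diagram, matching the transition maps $X^{\otimes n}_{h\Sigma_n} \to X^{\otimes n+1}_{h\Sigma_{n+1}}$ with the structure maps induced by $\one \to X$; this is a bookkeeping verification about the symmetric monoidal structure and the $\Sigma_n$-homotopy-orbit functors, but it is where the precise choice of indexing category is pinned down. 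Second, the identity $A\otimes X \simeq X^{\otimes n}_{h\Sigma_n}$ for the relevant constant-diagram colimits uses stability together with the tensoring of $\cC$ over $\Spc$ via the symmetric monoidal structure, so I would record that the weighted-colimit language of the paper's duality theorem matches these homotopy orbits. Modulo these two compatibility checks, the argument is a clean single application of the main theorem, with all the analytic content concentrated in the $\pi$-finiteness-plus-connectivity of the lax fibers.
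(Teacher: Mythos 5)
There is a genuine gap, and it sits precisely at the step you flagged as ``demanding genuine care'': the single functor you propose is not $\cC$-cofinal, and your expected identification of its lax fibers is wrong. First, a smaller point: the correct indexing category is $\Fin^\inj$, finite sets and \emph{injections} (the symmetric monoidal envelope of $\EE_0$), not $\Fin_*$ — general pointed maps would require multiplication or projection maps $X^{\otimes 2}\to X$, $X\to \one$ that a bare $\EE_0$-algebra does not possess. Now take the functor $f\colon \NN\to\Fin^\inj$, $m\mapsto\{1,\dots,m\}$. The lax fiber $\NN_{S/}$ over a set $S$ with $|S|=k$ is the left fibration over $\NN$ classified by $m\mapsto \mathrm{Inj}(S,\{1,\dots,m\})$: since $\NN$ is a poset with no automorphisms, morphisms in $\NN_{S/}$ must strictly commute with the chosen injections, so no symmetric group action gets quotiented out, and
\[
|\NN_{S/}|\ \simeq\ \colim_m \mathrm{Inj}\big(S,\{1,\dots,m\}\big)\ =\ \mathrm{Inj}(S,\NN),
\]
an \emph{infinite discrete set} for $k\ge 1$ — not connected, not $\pi$-finite, and not $\cC$-acyclic in any nontrivial stable rational $\cC$ (there $A\otimes X=\bigoplus_A X$ for discrete $A$). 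Consistently, the restriction of $S\mapsto X^{\otimes|S|}$ along $f$ is the diagram $\one\to X\to X^{\otimes 2}\to\cdots$ of \emph{plain} tensor powers, not the displayed diagram of symmetric powers $X^{\otimes n}_{h\Sigma_n}$, and the two colimits genuinely differ: for $X=\one\oplus Y$ the free algebra gives $\bigoplus_k Y^{\otimes k}_{h\Sigma_k}$, while $\colim_n X^{\otimes n}$ produces each $Y^{\otimes k}$ with infinite multiplicity. So no amount of rationality can make $f$ cofinal; the failure (infinitely many components in the fibers) is invisible to localization.

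The paper's proof of \Cref{Ishai} repairs both defects by splitting the argument into two steps, neither of which is a cofinality statement about $\NN\to\Fin^\inj$. First it filters the index category by cardinality, $\Fin^\inj\simeq\colim_n \Fin^\inj_{\le n}$, which expresses $\colim_{\Fin^\inj} X^{\otimes|S|}$ as the sequential colimit of the partial colimits $\colim_{\Fin^\inj_{\le n}} X^{\otimes|S|}$ — a formal decomposition of the colimit over a colimit of categories, with no acyclicity input. Then, at each finite stage, it applies \Cref{Local_Cof_Crit} to the inclusion $B\Sigma_n\hookrightarrow\Fin^\inj_{\le n}$ of the full subcategory on one $n$-element set; here the lax fiber over $S$ is $B\Sigma_{n-|S|}$ — connected and $\pi$-finite, hence $\cC$-acyclic by rationality — yielding $\colim_{\Fin^\inj_{\le n}} X^{\otimes|S|}\simeq X^{\otimes n}_{h\Sigma_n}$. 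It is only here, where the source category $B\Sigma_n$ carries automorphisms, that the homotopy orbits in the sequential diagram arise. In short: rationality is indeed the decisive input, but it must be spent fiberwise at each finite cardinality level, and the sequential shape of the answer comes from the cardinality filtration rather than from cofinality of a functor out of $\NN$.
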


We deduce \cref{Local_Cof_Crit_Intro} from a more general quantitative result, which we believe to be of independent interest, regarding \textit{weighted} colimits. These constitute a two-variable extension of the colimit operation which in addition to a diagram $X\colon I \to \cC$, takes as input also a \textit{weight} functor $W\colon I^\op \to \Spc$. More in detail, we have the \textit{twisted arrow} $\infty$-category of $I$ with its target-source right fibration\footnote{This is the usual source-target right fibration composed with $I\times I^\op \iso I^\op\times I$. The \textit{left} fibration $\Tw(I)^\op \to I^\op\times I$ classified by $\Map$ is used to define weighted \textit{limits}.} 
\[
    \Tw(I) \oto{\ (t,s)\ } I^\op \times I,
\]
classifying the mapping space functor $\Map \colon I^\op \times I \to \Spc$. 

\begin{defn}[{\cite[Definition 2.8]{gepner2017lax}}]
    Given functors $X\colon I \to \cC$ and $W\colon I^\op \to \Spc$, the \textit{colimit of $X$ weighted by $W$}, denoted $\colim_I^W X$, is the colimit of the composite functor
    \[
        \Tw(I) \oto{\ (t,s)\ } I^\op \times I \oto{W \otimes X} \cC.
    \]
\end{defn}

Our second main result is a certain duality for weighted colimits between restricting the diagram $X\colon I \to \cC$ along $f\colon J \to I$, and left Kan extending the weight $W \colon J^\op \to \Spc$ along $f^\op\colon J^\op \to I^\op$ (denoted $f^\op_!W$).  

\begin{theorem}[\cref{Colim_Duality}]\label[thm]{Colim_Duality_Intro}
    Let $\cC$ be an $\infty$-category and $f\colon J \to I$ a functor. For every diagram $X\colon I\to \cC$ and weight $W\colon J^\op \to \Spc$, we have an isomorphism
    \[
        \colim_J^W (X\circ f) \simeq 
        \colim_I^{f^\op_! W} X \qin \cC.
    \]
\end{theorem}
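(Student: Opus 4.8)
The plan is to carry out a short adjunction chase at the level of the presheaf on $\cC$ corepresented by each weighted colimit, which simultaneously settles the ``exists if and only if exists'' clause. Write $f\colon J\to I$, $X\colon I\to\cC$ and $W\colon J^\op\to\Spc$ as in the introduction, set $\mathrm{PSh}(K):=\mathrm{Fun}(K^\op,\Spc)$, and for a test object $Y\in\cC$ put $M:=\Map_\cC(X(-),Y)\in\mathrm{PSh}(I)$. The one nontrivial input is the universal property of weighted colimits: for any small $\infty$-category $K$, diagram $D\colon K\to\cC$ and weight $V\colon K^\op\to\Spc$, there is an equivalence natural in $Y$,
\[
\Map_{\mathrm{Fun}(\Tw(K),\cC)}\big((V\otimes D)\circ(t,s),\ \mathrm{const}_Y\big)\ \simeq\ \Map_{\mathrm{PSh}(K)}\big(V,\ \Map_\cC(D(-),Y)\big).
\]
The left-hand side is the space of cocones under the diagram whose colimit defines $\colim_K^V D$, so $Y\mapsto$ (left-hand side) is exactly the presheaf on $\cC$ corepresented by the (a priori possibly non-existent) weighted colimit. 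To prove it, note that the very definition of the copower as a constant colimit gives $\Map_\cC\big(V(k')\otimes D(k),Y\big)\simeq\Map_\Spc\big(V(k'),\Map_\cC(D(k),Y)\big)$ pointwise, and the resulting limit over the twisted arrow category realizes the end $\int_{k}\Map_\Spc\big(V(k),\Map_\cC(D(k),Y)\big)=\Map_{\mathrm{PSh}(K)}\big(V,\Map_\cC(D(-),Y)\big)$, using that the target--source twisted arrow construction computes ends of $\Spc$-valued functors. Since this uses nothing beyond the existence of the copowers already needed to write down the diagrams, proving that the two diagrams in the statement corepresent the same presheaf on $\cC$ gives the isomorphism together with the existence clause, by the Yoneda lemma.

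Granting this, the proof is formal. Fix $Y$ and apply the universal property to the left-hand weighted colimit, over $J$ with diagram $X\circ f$ and weight $W$: the presheaf it corepresents is $\Map_{\mathrm{PSh}(J)}\big(W,\Map_\cC(X(f(-)),Y)\big)$. Now $\Map_\cC(X(f(-)),Y)=M\circ f^\op=(f^\op)^*M$ is simply the restriction of $M$ along $f^\op\colon J^\op\to I^\op$, so the adjunction $f^\op_!\dashv(f^\op)^*$ between presheaf $\infty$-categories yields
\[
\Map_{\mathrm{PSh}(J)}\big(W,(f^\op)^*M\big)\ \simeq\ \Map_{\mathrm{PSh}(I)}\big(f^\op_!W,\ M\big).
\]
By the universal property applied over $I$, with diagram $X$ and weight $f^\op_!W$, the right-hand side is precisely the presheaf corepresented by $\colim_I^{f^\op_!W}X$. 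All three equivalences are natural in $Y$, so the two formal weighted colimits coincide, which is the assertion.

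The only real content is the universal property invoked above, and I expect the main obstacle to lie there: the careful $\infty$-categorical verification that the target--source twisted arrow construction computes the end $\int_k\Map_\Spc(V(k),\Map_\cC(D(k),Y))$ with the correct variance, and that cocones under $(V\otimes D)\circ(t,s)$ are thereby identified with $V$-weighted cocones out of $D$. Once this end/coend bookkeeping is settled---most likely by citing \cite{gepner2017lax} or by a direct coend computation---the remainder is the two-line chase above; in particular cocompleteness of $\cC$ is never used, only the copowers implicit in forming either weighted colimit.
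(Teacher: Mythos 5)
Your proposal is correct, but it takes a genuinely different route from the paper's. The paper first reduces to presentable $\cC$ via the presentable envelope $\widehat{j}\colon \cC \into \widehat{\cC}$ (using that $\widehat{j}$ preserves weighted colimits), and then runs a covariant argument through the free cocompletion: it factors $X$ as $\widehat{X}\circ j$, quotes $\colim_I^W X \simeq \widehat{X}(W)$ and the identification $\widehat{f} \simeq f^\op_!$ of the Yoneda extension with left Kan extension (Haugseng, Ramzi), and reads the result off a commuting triangle of colimit-preserving functors on presheaf categories. You instead argue contravariantly, comparing the copresheaves $Y \mapsto \Map_\cC(-,Y)$ corepresented by the two sides: the end-style universal property $\Map_\cC\big(\colim_K^V D, Y\big) \simeq \Map_{\Psh(K)}\big(V, \Map_\cC(D(-),Y)\big)$, followed by transposition across $f^\op_! \dashv (f^\op)^*$ and the Yoneda lemma. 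Both proofs ultimately pivot on the same adjunction $f^\op_! \dashv (f^\op)^*$; yours is essentially the mapping-out dual of the paper's. What each buys: your argument bypasses the presentable envelope entirely and delivers the existence clause for free via corepresentability, where the paper instead needs full faithfulness of $\widehat{j}$ together with its preservation of weighted colimits; the price is that your one nontrivial input --- that limits over the twisted arrow category compute the end $\Map_{\Psh(K)}(V, \Map_\cC(D(-),Y))$ with the correct variance --- must be proved or sourced, though it is indeed standard (it is the $\infty$-categorical natural-transformation-space formula of Glasman and Gepner--Haugseng--Nikolaus, applied to $K^\op$, with $\Tw(K^\op)\simeq \Tw(K)^\op$ absorbing the variance flip), just as the paper's nontrivial inputs ($\widehat{f} \simeq f^\op_!$ and $\colim^W_I X \simeq \widehat{X}(W)$, the latter stated for presentable $\cC$, which is exactly why the envelope appears there) are outsourced to citations. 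One point of parity worth noting: like the paper's definition of weighted colimits, your argument presupposes that the copowers needed to write down both twisted-arrow diagrams exist in $\cC$, so your closing claim that only "the copowers implicit in forming either weighted colimit" are used matches, rather than improves on, the paper's hypotheses.
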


\begin{rem}
    The weighted colimit $\colim_I^W X$ of a diagram $X\colon I \to \cC$ with respect to a weight $W\colon I^\op \to \Spc$ over a category $I$ can be likened to the integral $\int_M\varphi \,\mathrm{d}\mu$ of a function $\varphi \colon M \to \RR$ with respect to a measure $\mu$ on the space $M$. Given a map of spaces $f\colon N \to M$ and a measure $\nu$ on $N$, we have 
    \[
        \int_N (\varphi \circ f)\,\mathrm{d}\nu = \int_M \varphi \,\mathrm{d}(f_*\nu).
    \]
    \Cref{Colim_Duality_Intro} can be seen as the analogue of this identity where the left Kan extension $f_!W$ plays the role of the pushforward measure $f_*\nu$.
\end{rem}

The usual colimit along $J$ can be recovered by taking the weight $W$ to be the constant functor on $\pt \in \Spc$. We show that in this special case we get $(f^\op_!W)(a) \simeq |J_{a/}|$, obtaining the following:

\begin{corollary}[\cref{Cof_Quant}]\label[cor]{Cof_Quant_Intro}
    Let $\cC$ be an $\infty$-category. For every functor $f\colon J\to I$ and a diagram $X\colon I \to \cC$, we have an isomorphism
    \[
        \colim_J (X\circ f) \simeq \colim_I^{|J_{(-)/}|} X \qin \cC.
    \]
\end{corollary}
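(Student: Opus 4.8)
The plan is to obtain this corollary as a direct specialization of \Cref{Colim_Duality_Intro}, taking the weight to be terminal and then identifying both sides of the resulting isomorphism with the two colimits in the statement. Concretely, I would let $\underline{\pt}\colon J^\op\to\Spc$ denote the constant functor at the point and apply \Cref{Colim_Duality_Intro} to the functor $f\colon J\to I$, the diagram $X\colon I\to\cC$, and the weight $W=\underline{\pt}$. This yields at once an isomorphism
\[
    \colim_J^{\underline{\pt}}(X\circ f)\ \simeq\ \colim_I^{f^\op_!\underline{\pt}}\, X\qin\cC,
\]
so the whole task reduces to matching each side with the corresponding term of the corollary.

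For the left-hand side, the weight contributes nothing: in the defining composite $\Tw(J)\oto{(t,s)}J^\op\times J\oto{\underline{\pt}\otimes(X\circ f)}\cC$ the weight factor gives $\pt\otimes(X\circ f)(b)\simeq(X\circ f)(b)$, so the diagram being colimited depends only on the projection to $J$. This recovers the standard fact---recorded in the discussion preceding the statement---that the weighted colimit with constant weight $\underline{\pt}$ is the ordinary colimit, giving $\colim_J^{\underline{\pt}}(X\circ f)\simeq\colim_J(X\circ f)$.

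The substance lies in identifying the weight on the right-hand side, namely $f^\op_!\underline{\pt}\simeq|J_{(-)/}|$ as functors $I^\op\to\Spc$. Since $\Spc$ is cocomplete, the left Kan extension $f^\op_!$ along $f^\op\colon J^\op\to I^\op$ is computed pointwise: its value at $a$ is the colimit of $\underline{\pt}$ over the comma $\infty$-category of $f^\op$ over $a$, and because this diagram is constant at $\pt$ the colimit is simply the $\infty$-groupoidification of that comma category. I would then identify the comma category through the chain of canonical equivalences
\[
    J^\op\times_{I^\op}(I^\op)_{/a}\ \simeq\ J^\op\times_{I^\op}(I_{a/})^\op\ \simeq\ (J\times_I I_{a/})^\op\ =\ (J_{a/})^\op,
\]
using $(I^\op)_{/a}\simeq(I_{a/})^\op$, the compatibility of pullbacks with $(-)^\op$, and the defining pullback square for the lax fiber $J_{a/}=J\times_I I_{a/}$. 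Finally, since $\infty$-groupoidification commutes with $(-)^\op$ and every space is equivalent to its opposite, $|(J_{a/})^\op|\simeq|J_{a/}|$, whence $(f^\op_!\underline{\pt})(a)\simeq|J_{a/}|$. Assembling the three identifications gives $\colim_J(X\circ f)\simeq\colim_I^{|J_{(-)/}|}X$.

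The main obstacle I anticipate is not any individual equivalence above but making the identification $f^\op_!\underline{\pt}\simeq|J_{(-)/}|$ \emph{functorial} in $a\in I^\op$ rather than merely objectwise: one must check that the comma-category equivalences and the passage to $\infty$-groupoids assemble compatibly as $a$ varies, so that the two sides agree as functors $I^\op\to\Spc$ and may legitimately be inserted into the weighted colimit. The pointwise formula for left Kan extensions into the cocomplete $\infty$-category $\Spc$ does produce the entire functor, so the task reduces to exhibiting the lax-fiber construction $a\mapsto J_{a/}$ and the equivalences above naturally---most cleanly at the level of (co)Cartesian fibrations classifying $|J_{(-)/}|$---after which the remaining verifications are routine.
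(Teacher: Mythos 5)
Your proposal is correct and follows essentially the same route as the paper's proof: specialize \Cref{Colim_Duality} to the constant weight $\pt_J$, then compute $f^\op_!(\pt_J)$ via the pointwise Kan extension formula (\Cref{Lan_Ptw}) as $|(J^\op)_{/a}| \simeq |(J_{a/})^\op| \simeq |J_{a/}|$. Your functoriality worry is resolved exactly as you suggest and as the paper implicitly does: the left Kan extension $f^\op_!(\pt_J)$ is already a functor $I^\op \to \Spc$, and the objectwise identification of its values merely justifies the notation $|J_{(-)/}|$ for that weight.
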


In particular, if all the spaces $|J_{a/}|$ are $\cC$-acyclic, then the right-hand side of the above isomorphism coincides with the ordinary colimit of $X$ over $I$, thus proving \Cref{Local_Cof_Crit_Intro}.

\begin{rem}
    Using the pointwise formula for left Kan extension along $f\colon J \to I$, one gets another formula for $\colim_J (X\circ f)$ as a certain colimit over $I$,
    \[
        \colim_J (X\circ f) \simeq 
        \colim_I \big(\colim_{(b,f(b)\to a)\in J_{/a}} X(f(b))\big). 
    \]
    This involves the \textit{oplax fibers} $J_{/a}$ (rather than $J_{a/}$) and, while having its own uses, doesn't seem to lead to a proof of \Cref{Local_Cof_Crit_Intro}. Thus, the introduction of the additional weight variable to the colimit operation is essential for our argument.
\end{rem}

\paragraph{Acknowledgments.} We are grateful to Ishai Dan-Cohen for proposing  \cref{Ishai_Intro} as an application of a conjectural refinement of Quillen's theorem A and for many useful suggestions on a previous draft. We thank the other participants of the Seminarak where this refinement was essentially proved, inspiring the main ideas of this note. Special thanks to Shay Ben-Moshe for useful discussions about the subject matter and to Dorin Boger for useful comments on an earlier draft. 
We also thank Niko Naumann for renewing our interest in \cref{Local_Cof_Crit_Intro}, encouraging us to write this note. 
The first author was supported by the DFG through SFB 1085 ``Higher Invariants''.

\section{Presentable envelope}

Our first task is to reduce the discussion to the case of presentable $\infty$-categories. We achieve this by observing that every $\infty$-category $\cC$ admits a fully faithful embedding $\widehat{j}\colon \cC \into \widehat{\cC}$ such that,
\begin{enumerate}
    \item $\widehat{\cC}$ is presentable.
    \item $\widehat{j}$ preserves all small\footnote{By enlarging the universe, one may regard every given diagram as ``small'' and adjust the notion of \textit{presentability} correspondingly. The problem arises only when one attempts to deal with all diagrams at once, and then some uniform bound on the sizes is required.} colimits that exist in $\cC$.
\end{enumerate}
This follows from the results of sections 5.3.6 and 5.5.4 of \cite{HTT}. We briefly explain how for the convenience of the reader. The starting point is the \textit{free co-completion} given by the Yoneda embedding
\[
    j \colon \cC \intoo \Psh(\cC) := \Fun(\cC^\op, \Spc).
\]
The $\infty$-category $\Psh(\cC)$ is presentable and in particular admits all small colimits. However, the fully faithful embedding $j$ does not send colimits in $\cC$ to colimits in $\Psh(\cC)$. This can be fixed by applying \cite[Theorem 5.3.6.2]{HTT}, which constructs a fully faithful embedding  
\[
    j_\sR\colon \cC \intoo \Psh_\sR(\cC),
\]
where $\sR$ is any collection of colimiting cones in $\cC$, and such that
\begin{enumerate}
    \item $\Psh_\sR(\cC)$ admits all small colimits.
    \item $j_\sR$ maps all the colimiting cones in $\sR$ to colimiting cones in $\Psh_\sR(\cC)$. 
\end{enumerate}
Unpacking the proof, the $\infty$-category $\Psh_\sR(\cC)$ is constructed as a full subcategory of $\Psh(\cC)$ by the following procedure: Every cone diagram in $\sR$ induces a map from the image of the cone point (the colimit of the diagram in $\cC$) to the colimit of the corresponding diagram (without the cone point) in $\Psh(\cC)$. Let $S$ be the collection of all these Yoneda assembly maps. The $\infty$-category $\Psh_\sR(\cC)$ is defined to be the full subcategory of $\Psh(\cC)$ spanned by the $S$-local objects in the sense of \cite[Definition 5.5.4.1]{HTT}. Namely, 
\[
    \Psh_\sR(\cC) := S^{-1}\Psh(\cC).
\]
In particular, the Yoneda embedding itself is the special case $\sR =\es$.

Using this explicit presentation, we can apply \cite[Proposition 5.5.4.15]{HTT} to deduce that $\Psh_\sR(\cC)$ is not only cocomplete but also \textit{presentable}. We can now apply this to the case where $\sR$ is the collection of all colimiting cones in $\cC$.

\begin{defn}\label[defn]{Pres_Env}
    For every $\infty$-category $\cC$, we define 
    \[
        \widehat{j}\colon \cC \intoo \Psh_{\widehat{\sR}}(\cC) =: \widehat{\cC}
    \]
    where $\widehat{\sR}$ is the collection of all small colimiting cones in $\cC$. We say it exhibits $\widehat{\cC}$ as the \textit{presentable envelope} of $\cC$.
\end{defn}

\begin{rem}
    \cite[Theorem 5.3.6.2]{HTT} is more general in two aspects. First, it treats the case $\Psh_\sR^\sK(\cC)$, which adds to $\cC$ only colimits whose shape belongs to a certain collection $\sK$. The case $\Psh_\sR(\cC)$ is where $\sK$ is the collection of \textit{all} small $\infty$-categories. Second, one does not have to assume the cones in $\sR$ are colimiting cones. However, without this assumption, the functor $j_\sR$ need not be fully faithful.  
\end{rem}

We conclude this section with a remark about the interaction of presentable envelopes with \textit{weighted} colimits.

\begin{prop}\label[prop]{Pres_Env_Weight_Colim}
    Let $\cC$ be an $\infty$-category. The embedding of $\cC$ into its presentable envelope preserves all the small weighted colimits that exist in $\cC$.
\end{prop}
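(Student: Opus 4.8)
The plan is to unwind the definition of a weighted colimit into ordinary colimits and then apply the defining property of the presentable envelope — that $\widehat{j}$ preserves all small colimits that exist in $\cC$ — at each level. By definition, $\colim_I^W X$ is the ordinary colimit over $\Tw(I)$ of the composite
\[
    F := (W \otimes X) \circ (t,s) \colon \Tw(I) \oto{\ (t,s)\ } I^\op \times I \oto{\ W \otimes X\ } \cC,
\]
whose value at a point $\phi \in \Tw(I)$ with $(t,s)(\phi) = (a,b)$ is the tensoring $W(a) \otimes X(b)$. Thus the weighted colimit sits on top of two layers of ordinary colimits: the tensorings $W(a) \otimes X(b)$ (each a colimit of a constant $W(a)$-shaped diagram on $X(b)$) that define the bifunctor $W \otimes X$, and the outer colimit of $F$ over $\Tw(I)$. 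That $\colim_I^W X$ exists in $\cC$ means precisely that both layers exist in $\cC$: the tensorings exist, so that $F$ is defined as a functor valued in $\cC$, and $\colim_{\Tw(I)} F$ exists.

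First I would show that $\widehat{j}$ commutes with the tensoring bifunctor. For each $(a,b)$, the object $W(a) \otimes X(b)$ is by definition a small colimit existing in $\cC$, so the defining property of the presentable envelope yields a natural isomorphism $\widehat{j}(W(a) \otimes X(b)) \simeq W(a) \otimes \widehat{j}(X(b))$. Naturality in $(a,b)$ promotes this to an equivalence of functors $\widehat{j} \circ (W \otimes X) \simeq W \otimes (\widehat{j} \circ X)$, and precomposing with $(t,s)$ gives $\widehat{j} \circ F \simeq \big(W \otimes (\widehat{j} \circ X)\big) \circ (t,s)$. Since $\colim_{\Tw(I)} F$ exists in $\cC$, the embedding $\widehat{j}$ preserves it, and I would conclude
\[
    \widehat{j}\big(\colim_I^W X\big) = \widehat{j}\big(\colim_{\Tw(I)} F\big) \simeq \colim_{\Tw(I)}(\widehat{j} \circ F) \simeq \colim_{\Tw(I)}\big[(W \otimes (\widehat{j} \circ X)) \circ (t,s)\big] = \colim_I^W(\widehat{j} \circ X),
\]
which is exactly the assertion that $\widehat{j}$ preserves the weighted colimit.

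The only real subtlety — and the step I expect to require the most care — is the bookkeeping of the two nested levels of colimits and confirming that the defining property of $\widehat{\cC}$ applies at each. Both the tensorings and the outer colimit are small colimits in $\cC$, so the single fact that $\widehat{j}$ preserves all small colimits existing in $\cC$ handles both; the point is merely to be explicit that ``existence of the weighted colimit'' is the conjunction of these two conditions. Alternatively, one could collapse the two layers into a single colimit by a Fubini-type unstraightening argument, realizing $\colim_I^W X$ as an ordinary colimit of $X \circ s$ over the left fibration classified by $W \circ t$, after which preservation is immediate. I would mention this as a streamlined variant but carry out the direct two-level argument, since it requires no machinery beyond \Cref{Pres_Env}.
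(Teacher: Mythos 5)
Your proposal is correct and follows essentially the same route as the paper: the paper's proof simply observes that $\colim_I^W X$ is by definition an ordinary colimit over the small $\infty$-category $\Tw(I)$, so $\widehat{j}$ preserves it since it preserves all small colimits existing in $\cC$. Your extra step identifying $\widehat{j}\circ(W\otimes X)$ with $W\otimes(\widehat{j}\circ X)$ via preservation of the tensorings (themselves small constant-diagram colimits) is a legitimate refinement of a point the paper's one-line argument leaves implicit, not a different approach.
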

\begin{proof}
    Given a small diagram $X\colon I \to \cC$ and a weight $W\colon I^\op \to \cC$, the colimit of $X$ weighted by $W$ is the colimit of the diagram
    \[
        \Tw(I) \oto{\ (t,s)\ } I^\op \times I \oto{W \otimes X} \cC.
    \]
    Since $\Tw(I)$ is also small, and $\widehat{j} \colon \cC \into \widehat{\cC}$ preserves all small colimits, it follows that it also preserves the weighted colimit of $X$ by $W$.
\end{proof}

\section{Main results}

Let $\cC$ be a presentable $\infty$-category, and let $f \colon J \to I$, $X \colon I \to \cC$ be functors of $\infty$-categories. The comparison map 
\[
    \alpha \colon 
    \colim_J (X \circ f) \too 
    \colim_I X
\]
can be expressed using the calculus of Kan extensions. We have an adjunction
\[
    f_! \colon \Fun(J,\cC) \adj \Fun(I,\cC) \noloc f^*
\]
where $f^*$ is the restriction along $f$, and its left adjoint $f_!$ is the left Kan extension along $f$. Writing $q \colon I \to \pt$, we can describe $\alpha$ as the counit of the adjunction $f_! \dashv f^*$ whiskered by $q$:
\[
    \colim_J (X\circ f) =  
    q_!f_!f^* X \oto{\ \alpha \ } 
    q_! X = 
    \colim _I X. 
\]

The left Kan extension of a particular diagram $X\colon J \to \cC$ along a functor $f\colon J\to I$ can be considered more generally as an adjoint of $f^*\colon \Fun(I,\cC) \to \Fun(J,\cC)$ at the object $X$, without assuming presentability or the existence of a global adjoint for $f^*$. For future use, we recall the pointwise formula for it. 

\begin{prop}[{\cite[Lemma 4.3.2.13]{HTT}}]\label[prop]{Lan_Ptw}
    Let $X\colon J \to \cC$ be a functor of $\infty$-categories. For every functor $f\colon J \to I$ and $a \in I$ we have,
    \[
        (f_!X)(a) \simeq \colim(J_{/a} \to J \oto{X} \cC).
    \]
    Namely, if the $J_{/a}$-shaped colimit on the right exists in $\cC$ for all $a$ in $I$, then $f_!X$ exists and is given pointwise by the above formula.
\end{prop}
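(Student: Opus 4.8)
The plan is to establish the formula by corepresentability. Write $p \colon J_{/a} \to J$ for the canonical projection, and suppose that the colimit $\colim(J_{/a} \oto{\ p\ } J \oto{\ X\ } \cC)$ exists; call this object $L(a)$. By the Yoneda lemma it suffices to produce, naturally in $Y \in \cC$, an equivalence
\[
    \Map_\cC(L(a), Y) \simeq \Map_\cC\big((f_!X)(a), Y\big),
\]
where the right-hand side is interpreted through the universal property of $f_!$ at $X$ (i.e.\ as an adjoint value, without presuming a global adjoint to $f^*$). Once the two sides are matched for all $Y$, the same comparison assembles $a \mapsto L(a)$ into a functor corepresenting $\Map_{\Fun(J,\cC)}(X, f^*(-))$, exhibiting it as the left Kan extension.

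First I would unwind the left-hand side. Since $L(a)$ is a colimit, $\Map_\cC(L(a), Y) \simeq \lim_{(b,\,f(b)\to a)} \Map_\cC(X(b), Y)$. The key observation is that $J_{/a} \simeq J \times_I I_{/a}$ is precisely the right fibration over $J$ classified by the functor $W_a := \Map_I(f(-), a) \colon J^\op \to \Spc$; concretely, the fibre over $b$ is $\Map_I(f(b),a)$. Hence the colimit over $J_{/a}$ is the $W_a$-weighted colimit of $X$, and dually the limit above is the associated weighted limit, i.e.\ the end
\[
    \Map_\cC(L(a), Y) \simeq \int_{b\in J} \Map_{\Spc}\big(\Map_I(f(b), a),\, \Map_\cC(X(b), Y)\big).
\]

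Next I would unwind the right-hand side through two adjunctions. Evaluation $\ev_a = a^* \colon \Fun(I,\cC) \to \cC$ (restriction along $a \colon \pt \to I$) is left adjoint to the right Kan extension $a_*$, so together with $f_! \dashv f^*$ we obtain
\[
    \Map_\cC\big((f_!X)(a), Y\big) \simeq \Map_{\Fun(I,\cC)}(f_!X, a_*Y) \simeq \Map_{\Fun(J,\cC)}(X, f^*a_*Y).
\]
The right Kan extension along a point inclusion is a cotensor, $(a_*Y)(i) \simeq Y^{\Map_I(i,a)}$ --- this follows directly from Yoneda in $\Psh(I)$ rather than from the pointwise formula one is proving, so there is no circularity --- whence $(f^*a_*Y)(b) \simeq Y^{\Map_I(f(b),a)}$. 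Writing the functor-category mapping space as an end and using the tensor–cotensor adjunction in $\Spc$ turns the right-hand side into the very same end as above. Yoneda then yields $L(a) \simeq (f_!X)(a)$.

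The main obstacle is not either computation, which agree term by term, but the coherence bookkeeping: promoting $a \mapsto L(a)$ to an honest functor $I \to \cC$ and producing the unit $X \to f^*L$ that witnesses it as $f_!X$, all without assuming $f^*$ has a global right adjoint. I would handle this fibrationally, replacing $f$ by its associated correspondence and computing $f_!X$ as a relative colimit, so that functoriality in $a$ is inherited from the functoriality of the family of comma categories $\{J_{/a}\}_{a\in I}$. The two inputs that make the end-level comparison canonical rather than merely objectwise are exactly the identifications $J_{/a} \simeq J\times_I I_{/a} \simeq \mathrm{Un}(W_a)$ and the cotensor formula for $a_*$; verifying that these are natural in $a$ is where the genuine work lies, and is what upgrades the objectwise equivalences above into the asserted equivalence of functors.
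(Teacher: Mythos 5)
Your route is genuinely different from the paper's. The paper itself disposes of this statement by citation to \cite[Lemma 4.3.2.13]{HTT}, and its own draft argument (the appendix) is a base-change computation: one forms the pullback square exhibiting $J_{/a} \simeq J\times_I I_{/a}$ with vertical right fibrations, invokes the Beck--Chevalley ($\BC_!$) condition for the induced square of restriction functors to get $\pi_I^* f_! X \simeq (f_a)_!\pi_J^* X$, and then evaluates at $\Id_a$, which is terminal in $I_{/a}$, so that evaluation coincides with the colimit over $I_{/a}$. That argument buys functoriality and the unit essentially for free, since everything is phrased at the level of Kan extension functors. Your proposal instead dualizes: you corepresent both sides and match two ends, using the identification of $J_{/a}$ as the unstraightening of $W_a = \Map_I(f(-),a)$ on one side and the adjunction $a^* \dashv a_*$ together with the cotensor formula $(a_*Y)(i)\simeq Y^{\Map_I(i,a)}$ on the other. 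The end-level computations are correct, your derivation of the cotensor formula via the Yoneda lemma in $\Psh(I)$ is indeed non-circular, and the approach has the virtue of making visible why the comma category appears (it is the unstraightened weight), which resonates with the weighted-colimit perspective of this paper.

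Two concrete gaps remain, one of which you flag yourself. First, the step $\Map_\cC\bigl((f_!X)(a),Y\bigr)\simeq \Map_{\Fun(I,\cC)}(f_!X,a_*Y)$ requires $a_*Y$ to \emph{exist}, i.e.\ requires $\cC$ to admit the cotensors $Y^{\Map_I(i,a)}$ for all $i\in I$. The proposition assumes only the existence of certain \emph{colimits} in an otherwise arbitrary $\cC$, so for general $\cC$ this step does not parse. It is repairable with machinery already in the paper: pass to the presentable envelope $\widehat{\cC}$ of \cref{Pres_Env}, which is complete, preserves the $J_{/a}$-shaped colimits assumed to exist, and is fully faithful levelwise, so that mapping spaces in $\Fun(I,\cC)$ and $\Fun(J,\cC)$ are computed in $\Fun(I,\widehat{\cC})$ and $\Fun(J,\widehat{\cC})$ and the universal property transfers back --- but your proposal never performs this reduction, and without it the argument proves a weaker statement. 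Second, as you concede, the Yoneda matching only identifies $(f_!X)(a)$ with $L(a)$ \emph{conditional on existence}; the existence claim, the assembly of $a\mapsto L(a)$ into a functor, and the unit $X\to f^*L$ are exactly the content of the cited HTT lemma, and your proposed fix (replace $f$ by its correspondence and compute a relative colimit) is in substance HTT's own proof. So as a self-contained alternative the proposal is incomplete at precisely the point where the hard work lives, whereas the paper's Beck--Chevalley argument handles existence, functoriality, and the formula in one stroke.
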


By the universal property of $\Psh(I)$ as the free co-completion, the diagram $X \colon I \to \cC$ factors uniquely through the Yoneda embedding
\[
    I \intoo \Psh(I) \oto{\ \widehat{X}\ } \cC,
\]
where the functor $\widehat{X}$ is the unique colimit preserving extension (called the \textit{Yoneda extension}) of $X$. This can also be thought of as the left Kan extension of $X$ along the Yoneda embedding.
Observing that weights on $I$ are precisely objects of $\Psh(I)$, we have the following re-interpretation of weighted colimits:

\begin{prop}[{\cite[Proposition~4.9]{haugseng2022co}}]
    Let $\cC$ be a presentable $\infty$-category. For every diagram $X\colon I \to \cC$ and weight $W\colon I^\op \to \Spc$, we have
    \[
         \colim^W_I X \simeq \widehat{X}(W).
    \]  
\end{prop}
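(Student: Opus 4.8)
The plan is to regard both sides as functors of the weight $W$ and to compare them using the universal property of $\Psh(I)$ as the free cocompletion of $I$. Fix the diagram $X\colon I \to \cC$ and define two functors $\Phi, \Psi \colon \Psh(I) \to \cC$ by $\Phi(W) := \colim^W_I X$ and $\Psi(W) := \widehat{X}(W)$. Since $\cC$ is presentable, restriction along the Yoneda embedding $j\colon I \to \Psh(I)$ yields an equivalence $\Fun^{\mathrm{L}}(\Psh(I), \cC) \oto{\ \sim\ } \Fun(I,\cC)$ between colimit-preserving functors and arbitrary functors. Hence, once we verify that $\Phi$ and $\Psi$ both preserve colimits and that their restrictions $\Phi \circ j$ and $\Psi \circ j$ agree, we may conclude $\Phi \simeq \Psi$, which is precisely the asserted isomorphism $\colim^W_I X \simeq \widehat{X}(W)$, naturally in $W$.

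That $\Psi = \widehat{X}$ preserves colimits is immediate, as $\widehat{X}$ is by definition the (essentially unique) colimit-preserving extension of $X$ along $j$; in particular $\Psi(j(a)) = \widehat{X}(j(a)) \simeq X(a)$, so $\Psi \circ j \simeq X$. For $\Phi$, recall that $\Phi(W)$ is the colimit over $\Tw(I)$ of the composite $\Tw(I) \oto{(t,s)} I^\op \times I \oto{W \otimes X} \cC$, where $(W\otimes X)(b',b) = W(b') \otimes X(b)$. Viewed as a functor of $W$, this is a composite of colimit-preserving operations: colimits in $\Psh(I)$ and in $\Fun(I^\op\times I,\cC)$ are computed pointwise, each evaluation $W \mapsto W(b')$ preserves colimits, the tensoring $(-)\otimes X(b)\colon \Spc \to \cC$ preserves colimits, precomposition with $(t,s)$ preserves (pointwise) colimits, and finally $\colim_{\Tw(I)}$ preserves colimits. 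Thus $\Phi$ preserves colimits as well.

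It remains to compute $\Phi \circ j$, i.e.\ the weighted colimit with a representable weight, and this is where I expect the main work to lie. Concretely, one must identify
\[
    \Phi(j(a)) = \colim_{\Tw(I)}\big(\Map_I(t(-),a) \otimes X(s(-))\big) \simeq X(a),
\]
naturally in $a$. This is the $\infty$-categorical co-Yoneda (density) lemma for weighted colimits: since the colimit over $\Tw(I)$ of a functor of the form $F \circ (t,s)$ computes the coend $\int^{b\in I} F(b,b)$, the representable weight gives $\int^{b} \Map_I(b,a)\otimes X(b) \simeq X(a)$. I anticipate this identification to be the crux of the argument; it can be obtained either by invoking the co-Yoneda lemma directly, or by exhibiting the relevant cofinality — the identity $\mathrm{id}_a$ selects a suitably terminal part of the indexing diagram — so that the $\Tw(I)$-indexed colimit collapses onto the value $X(a)$. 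With this in hand we get $\Phi \circ j \simeq X \simeq \Psi \circ j$, and the universal property of the free cocompletion completes the proof.
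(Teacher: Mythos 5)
Your proof is necessarily a different route from the paper's, because the paper gives no proof at all: it quotes the statement directly from \cite[Proposition~4.9]{haugseng2022co}. That said, your outline is exactly the standard argument behind the cited result, and its skeleton is sound: the equivalence $\Fun^{\mathrm{L}}(\Psh(I),\cC) \simeq \Fun(I,\cC)$ given by restriction along the Yoneda embedding (\cite[Theorem 5.1.5.6]{HTT}) reduces everything to checking that both sides preserve colimits in $W$ and agree on representables; your verification that $\Phi(W) = \colim^W_I X$ is colimit-preserving (pointwise colimits in presheaf and functor categories, colimit-preservation of $(-)\otimes X(b)$ since $\cC$ is presentable, restriction along $(t,s)$, and $\colim_{\Tw(I)}$ being a left adjoint) is correct. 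One small remark: the paper's own \Cref{Weight_Exa}\labelcref{item-2}, which records $\colim^{\Map(-,a)}_I X \simeq X(a)$, is stated \emph{after} this proposition and is derived from it via $\widehat{X}(j(a)) \simeq X(a)$, so you were right not to lean on it --- that would be circular.

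The one genuine soft spot is the crux you yourself flag. Invoking the $\infty$-categorical co-Yoneda lemma as a citable black box (it is proved, e.g., in \cite{gepner2017lax} and in \cite{haugseng2022co} itself) does complete the proof. But your fallback sketch --- that ``the identity $\mathrm{id}_a$ selects a suitably terminal part of the indexing diagram'' --- does not work as stated if read over $\Tw(I)$. Unstraightening the functor $\Map_I(t(-),a)\colon \Tw(I) \to \Spc$ gives a category of elements whose objects are pairs $(g\colon x \to y,\ h\colon x \to a)$, and the object $(\mathrm{id}_a, \mathrm{id}_a)$ is in general \emph{not} terminal there: a map $(g,h) \to (\mathrm{id}_a,\mathrm{id}_a)$ amounts to a pair $(u\colon a \to x,\ v\colon y \to a)$ with $v g u = \mathrm{id}_a$ and $h u = \mathrm{id}_a$, and such pairs need not be unique. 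For instance, in the full subcategory of $\mathrm{Set}$ on $a = \{\ast\}$ and $x = \{1,2\}$, the object $(h, h)$ with $h\colon x \to a$ the unique map admits two morphisms to $(\mathrm{id}_a,\mathrm{id}_a)$, one for each section $u$ of $h$. The terminal-object argument instead applies to $I_{/a}$, the unstraightening over $I$ of the representable weight itself; identifying the $\Tw(I)$-coend with the colimit over that unstraightening is precisely the nontrivial content of the co-Yoneda lemma and requires an honest cofinality argument. So: your proof is complete if the representable case is cited, but incomplete if you intend to prove it by the terminality heuristic as written.
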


Before proceeding, it is illuminating to consider some examples of weighted colimits. 

\begin{example}\label[example]{Weight_Exa}
    Let $X\colon I \to \cC$ be a functor of $\infty$-categories.
    \begin{enumerate}
        \item For $W := \pt_I$, the constant $\pt$-values weight, we recover the usual colimit,
        \[
            \colim_I^W X = \colim_I X.
        \]
        \item\label[equation]{item-2} For $W := \Map(-,a)$, we get
        \[
            \colim_I^W X = X(a).
        \]
        Thus, the representable presheaf $\Map(-,a)$ behaves as the atomic measure concentrated at~$a$.
    \end{enumerate}
\end{example}

This leads to the following general duality result for weighted colimits (\Cref{Colim_Duality_Intro}):

\begin{thm}\label[thm]{Colim_Duality}
    Let $\cC$ be an $\infty$-category and $f\colon I\to J$ a functor. For every diagram $X\colon I\to \cC$ and weight $W\colon J^\op \to \Spc$, we have an isomorphism
    \[
        \colim_J^W (f^* X) \simeq 
        \colim_I^{f^\op_! W} X \qin \cC.
    \]
\end{thm}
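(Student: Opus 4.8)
The plan is to reduce to the presentable case and then recognize both sides as the evaluation at $W$ of a single colimit-preserving functor out of $\Psh(J)$, so that the whole identity becomes a statement about two cocontinuous functors that agree on representables. First I would pass to the presentable envelope $\widehat{j}\colon \cC \into \widehat{\cC}$ of \Cref{Pres_Env}. By \cref{Pres_Env_Weight_Colim} this embedding preserves all small weighted colimits, and being fully faithful it also reflects their existence and value (if the canonical image of a candidate colimit in $\widehat\cC$ is a weighted colimit lying in the essential image, it is already the weighted colimit in $\cC$). Both weighted colimits automatically exist in $\widehat{\cC}$, so it suffices to prove the isomorphism there; this also recovers the existence convention for general $\cC$. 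Hence I may assume $\cC$ is presentable.

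With $\cC$ presentable I would invoke the identification $\colim^W_I X \simeq \widehat{X}(W)$, where $\widehat{X}\colon \Psh(I)\to \cC$ is the (cocontinuous) Yoneda extension of $X$. Then the right-hand side is $\widehat{X}(f^\op_! W)$, i.e.\ the value at $W$ of the composite $\widehat{X}\circ f^\op_!\colon \Psh(J)\to \cC$, while the left-hand side is $\widehat{f^* X}(W)$, the value at $W$ of the Yoneda extension of $f^* X\colon J\to \cC$. Both of these are colimit-preserving functors $\Psh(J)\to \cC$: the Yoneda extension by construction, and $f^\op_!$ because, as the left Kan extension functor on presheaves, it is left adjoint to restriction along $f^\op$ and hence cocontinuous, after which $\widehat{X}$ preserves colimits again.

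Next I would apply the universal property of $\Psh(J)$ as the free cocompletion of $J$: restriction along the Yoneda embedding $j_J\colon J\to \Psh(J)$ induces an equivalence $\Fun^L(\Psh(J),\cC)\simeq \Fun(J,\cC)$, so two cocontinuous functors out of $\Psh(J)$ are equivalent once their restrictions along $j_J$ agree. The restriction of $\widehat{f^* X}$ is $f^* X = X\circ f$ by definition of the Yoneda extension. For $\widehat{X}\circ f^\op_!$ I would use the compatibility of the Yoneda embedding with left Kan extension of presheaves, $f^\op_!\, j_J(b)\simeq j_I(f(b))$, which follows from the adjunction $f^\op_!\dashv (f^\op)^*$ together with the Yoneda lemma; then $(\widehat{X}\circ f^\op_!)(j_J(b))\simeq \widehat{X}(j_I(f(b)))\simeq X(f(b))$, using that a representable weight computes the corresponding value of the diagram (as in \cref{item-2}). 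Since both restrictions equal $X\circ f$, the two functors are equivalent, and evaluating at $W$ yields $\colim_J^W(f^* X)\simeq \widehat{f^* X}(W)\simeq \widehat{X}(f^\op_! W)\simeq \colim_I^{f^\op_! W} X$.

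The main obstacle is the genuinely load-bearing input $f^\op_!\, j_J\simeq j_I\circ f$, i.e.\ that left Kan extension of presheaves sends representables to representables compatibly with $f$, packaged together with the observation that both sides of the desired equivalence are cocontinuous in the weight variable; once these are in place, the universal property of $\Psh(J)$ upgrades the pointwise comparison on representables into a single natural equivalence. The only other delicate point is tracking the existence convention through the reduction to $\widehat{\cC}$, which I would handle purely via the reflection property of the fully faithful embedding $\widehat{j}$ rather than by any separate argument.
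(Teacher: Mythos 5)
Your proposal is correct and follows essentially the same route as the paper: reduce to the presentable envelope via \cref{Pres_Env} and \cref{Pres_Env_Weight_Colim}, identify both sides as values of cocontinuous functors out of $\Psh(J)$ via $\colim^W_I X \simeq \widehat{X}(W)$, and conclude from the universal property of the free cocompletion. The only difference is that where the paper cites the identification $\widehat{f} = f^\op_!$ from the literature, you verify the needed input $f^\op_!\, j_J \simeq j_I \circ f$ inline via the adjunction $f^\op_! \dashv (f^\op)^*$ and the Yoneda lemma, which is a perfectly sound unpacking of the same fact.
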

\begin{proof}
    By replacing $\cC$ with its presentable envelope $\widehat{\cC}$ from \cref{Pres_Env}, and taking into account \cref{Pres_Env_Weight_Colim}, we may assume without loss of generality that $\cC$ itself is presentable.
    Consider the commutative diagram arising from the universal property of the presheaf $\infty$-category as a free co-completion:
    \[\begin{tikzcd}
    	J && I && \cC \\
    	\\
    	{\Psh(J)} && {\Psh(I)} && \cC
    	\arrow["f", from=1-1, to=1-3]
    	\arrow["{f^*X}", curve={height=-30pt}, from=1-1, to=1-5]
    	\arrow[hook', from=1-1, to=3-1]
    	\arrow["X", from=1-3, to=1-5]
    	\arrow[hook', from=1-3, to=3-3]
    	\arrow[equals, from=1-5, to=3-5]
    	\arrow["{\widehat{f}}", from=3-1, to=3-3]
    	\arrow["{\widehat{X\circ f}}"', curve={height=30pt}, from=3-1, to=3-5]
    	\arrow["{\widehat{X}}", from=3-3, to=3-5]
    \end{tikzcd}\]
    Furthermore, we have $\widehat{f} = f^\op_!$ (see \cite[Corollary F]{haugseng2023two} or \cite{ramzi2023elementary}). Namely, $\widehat{f}$ is itself the left adjoint of the restriction functor
    \[
        (f^\op)^* \colon 
        \Fun(I^\op, \Spc) \too 
        \Fun(J^\op, \Spc).
    \]
    Chasing the diagram, we get
    \[
        \colim_J^W (f^* X) \simeq 
        (\widehat{X\circ f})(W) \simeq
        \widehat{X}(f_!^\op(W)) \simeq
        \colim_I^{f^\op_! W} X.
    \]
\end{proof}

\begin{cor}\label[cor]{Cof_Quant}
    Let $\cC$ be an $\infty$-category and $f\colon I\to J$ a functor of $\infty$-categories. For every diagram $X\colon I\to \cC$, we have an isomorphism
    \[
        \colim_J (f^* X) \simeq 
        \colim_I^{|J_{(-)/}|} X \qin \cC.
    \]
\end{cor}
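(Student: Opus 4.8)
The plan is to specialize the duality of \cref{Colim_Duality} to the terminal weight and then compute the resulting transported weight explicitly. Concretely, I would apply \cref{Colim_Duality} with $W := \pt_J$, the constant weight $J^\op \to \Spc$ on the point. By \cref{Weight_Exa}~(1) the weighted colimit on the left-hand side collapses to the ordinary colimit, $\colim_J^{\pt_J}(f^* X) \simeq \colim_J(f^* X)$, so the theorem already yields
\[
    \colim_J(f^* X) \simeq \colim_I^{f^\op_! \pt_J} X \qin \cC.
\]
It then remains to identify the transported weight $f^\op_! \pt_J \colon I^\op \to \Spc$ with the functor $|J_{(-)/}|$, after which substituting into the displayed isomorphism gives the claim.

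For this identification I would use the pointwise formula for left Kan extensions from \cref{Lan_Ptw}, applied to $f^\op \colon J^\op \to I^\op$ and the constant diagram $\pt_J \colon J^\op \to \Spc$. Since $\Spc$ is cocomplete, all the relevant colimits exist, so for each $a \in I$ we obtain
\[
    (f^\op_! \pt_J)(a) \simeq \colim\big((J^\op)_{/a} \to J^\op \oto{\pt_J} \Spc\big),
\]
where $(J^\op)_{/a}$ is the oplax fiber (comma $\infty$-category) of $f^\op$ over $a$. The colimit of a constant $\pt$-valued diagram over any $\infty$-category $K$ is its groupoidification $|K|$, so the right-hand side is simply $|(J^\op)_{/a}|$.

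The one genuine point of care, which I expect to be the main (if modest) obstacle, is matching this comma $\infty$-category with the lax fiber $J_{a/}$ while keeping track of opposites. Unwinding definitions, an object of $(J^\op)_{/a}$ is a pair $(b, f^\op(b) \to a)$ in $I^\op$, i.e.\ a pair $(b, a \to f(b))$ in $I$, which is exactly an object of $J_{a/}$; a morphism, however, corresponds under the passage to opposites to a morphism of $J_{a/}$ with its direction reversed, so in fact $(J^\op)_{/a} \simeq (J_{a/})^\op$. This is harmless for our purpose, since groupoidification is invariant under passing to the opposite $\infty$-category, whence $|(J^\op)_{/a}| \simeq |(J_{a/})^\op| \simeq |J_{a/}|$. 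I would finally note that these equivalences are natural in $a \in I^\op$, the assignment $a \mapsto J_{a/}$ being functorial with a map $a \to a'$ in $I$ inducing $J_{a'/} \to J_{a/}$ by precomposition on the structure maps, so they assemble into an equivalence of weights $f^\op_! \pt_J \simeq |J_{(-)/}|$ in $\Fun(I^\op, \Spc)$. Substituting this into the first display yields the stated isomorphism.
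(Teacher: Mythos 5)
Your proposal is correct and follows exactly the paper's argument: specialize \cref{Colim_Duality} to $W = \pt_J$, then identify the transported weight pointwise via \cref{Lan_Ptw} as $(f^\op_!\pt_J)(a) \simeq |(J^\op)_{/a}| \simeq |(J_{a/})^\op| \simeq |J_{a/}|$. Your extra remarks on the opposite-category bookkeeping and the naturality in $a$ make explicit two points the paper's proof leaves implicit, but the route is the same.
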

\begin{proof}
    Applying \Cref{Colim_Duality} in the special case of $W = \pt_J$, the constant weight on the point, we get
    \[
        \colim_J (f^* X) \simeq 
        \colim_J^{\pt_J} (f^* X) \simeq 
        \colim_I^{f^{\op}_!(\pt_J)} X.
    \]
    It remains to analyze the weight $f^{\op}_!(\pt_J)$. Using the pointwise formula for the left Kan extension (see \Cref{Lan_Ptw}), we get that its value at $a$ is given by the colimit over $(J^\op)_{/a}$ of the constant diagram on $\pt \in \Spc$. Thus,
    \[
        f^{\op}_!(\pt_J)(a) \simeq 
        |(J^\op)_{/a}| \simeq
        |(J_{a/})^\op)| \simeq
        |J_{a/}|.
    \]
\end{proof}

\begin{example}\label[example]{Duality_Exa}
    Another special case of \Cref{Colim_Duality} is when $W\colon J^\op \to \Spc$ is $\Map(-,b)$ for some $b\in J$. One the one hand, we get (see \Cref{Weight_Exa}\labelcref{item-2})
    \[
        \colim_J^{\Map(-,b)} (f^*X) = (f^*X)(b) = X(f(b)).
    \]
    On the other, by \Cref{Colim_Duality}, we get
    \[
        \colim_J^{\Map(-,b)} (f^*X) = 
        \colim_I^{f^\op_!\Map(-,b)} X.
    \]
    And indeed, one can compute directly using the pointwise formula for the left Kan extension (see \Cref{Lan_Ptw}) that $\Map(-,b) \simeq b_! \pt$ for $b \colon \pt \to J^{\op}$ the map picking out $b$, and hence 
    \[
        f^\op_!\Map(-,b) = 
        f^\op_!b_! \pt = 
        f^\op(b)_! \pt =
        \Map(-,f(b)).
    \]
\end{example}

\Cref{Cof_Quant} leads immediately to the criterion for $\cC$-cofinality stated in \Cref{Local_Cof_Crit_Intro}.

\begin{thm}\label[thm]{Local_Cof_Crit}
    Let $\cC$ be an $\infty$-category. A functor $f \colon J \to I$ is $\cC$-cofinal if $|J_{a/}|$ is $\cC$-acyclic for all $a \in I$. The converse holds provided $\cC$ admits all constant $\Map(a,b)$-shaped colimits for $a,b\in I$.
\end{thm}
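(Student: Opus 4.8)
The plan is to derive both implications from \Cref{Cof_Quant}, which identifies $\colim_J(f^*X)$ with the weighted colimit $\colim_I^{|J_{(-)/}|}X$, and then to compare the weight $|J_{(-)/}|$ with the terminal weight $\pt_I$ along the natural collapse map of weights $|J_{(-)/}| = f^\op_!(\pt_J)\too\pt_I$, namely the counit of $f^\op_!\dashv (f^\op)^*$ applied to $\pt_I$ (noting $(f^\op)^*\pt_I\simeq\pt_J$).

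For sufficiency, both $\colim_I^{|J_{(-)/}|}X$ and $\colim_I^{\pt_I}X=\colim_I X$ are, by definition, colimits of the composites $\Tw(I)\oto{(t,s)} I^\op\times I \oto{(-)\otimes X}\cC$ attached to the two weights, and the collapse map induces a natural transformation between these $\Tw(I)$-indexed diagrams. Over an object of $\Tw(I)$ lying above $(b,a)\in I^\op\times I$ it is the collapse map $|J_{b/}|\otimes X(a)\too X(a)$, which is an equivalence exactly because $|J_{b/}|$ is $\cC$-acyclic (this also guarantees the copower exists). An objectwise equivalence of diagrams is an equivalence in the functor category, so it induces an equivalence on colimits, one existing iff the other does; together with \Cref{Cof_Quant} this gives $\colim_J(f^*X)\simeq\colim_I X$, which is the comparison map $\alpha$ by the compatibility discussed below, so $f$ is $\cC$-cofinal.

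For the converse, fix $a\in I$ and $Z\in\cC$; I must show the collapse map $|J_{a/}|\otimes Z\too Z$ is an equivalence. The test diagram I would use is the left Kan extension $X:=a_!Z\colon I\to\cC$ of $Z\colon\pt\to\cC$ along $a\colon\pt\to I$. By \Cref{Lan_Ptw} one has $(a_!Z)(b)\simeq\Map(a,b)\otimes Z$, which exists precisely by the hypothesis that $\cC$ admits constant $\Map(a,b)$-shaped colimits, and this is exactly where that hypothesis enters. On one side, $q_I\circ a\simeq\mathrm{id}_\pt$ forces $\colim_I(a_!Z)\simeq Z$. On the other side, $f^*(a_!Z)$ is the diagram $j\mapsto\Map(a,f(j))\otimes Z$; since the left fibration $J_{a/}\to J$ straightens to $j\mapsto\Map(a,f(j))$, a Grothendieck-construction (Fubini) argument identifies $\colim_J f^*(a_!Z)$ with $\colim_{J_{a/}}\underline{Z}\simeq|J_{a/}|\otimes Z$, one existing iff the other. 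As $f$ is $\cC$-cofinal, the comparison map $\alpha$ now yields $|J_{a/}|\otimes Z\simeq Z$; since $Z$ was arbitrary, $|J_{a/}|$ is $\cC$-acyclic.

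The main obstacle, common to both directions, is to check that the equivalence produced really is the collapse map and not merely some abstract equivalence of objects: $\cC$-cofinality and $\cC$-acyclicity are assertions about specific canonical maps, so one must verify that the comparison map $\alpha$ — the whiskered counit of $f_!\dashv f^*$ — corresponds, under the isomorphism of \Cref{Cof_Quant}, to the map induced by the counit of weights $f^\op_!(\pt_J)\too\pt_I$. I would isolate this compatibility once, as a naturality statement for \Cref{Cof_Quant} (equivalently \Cref{Colim_Duality}) in the weight variable, exploiting that $\widehat{f}=f^\op_!$ realizes the very same adjunction on the diagram side and on the weight side. Granting this, the remaining colimit-existence bookkeeping — the copower $(-)\otimes Z$ commuting with the $J$-indexed colimit — is routine under the stated hypotheses.
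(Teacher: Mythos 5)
Your proposal is correct and follows the paper's proof in all essentials: sufficiency is obtained exactly as in the paper, by combining \Cref{Cof_Quant} with the pointwise collapse of the unique weight map $|J_{(-)/}| \to \pt_I$, and necessity uses the same test diagram $a_!Z$ with $(a_!Z)(b) \simeq \Map(a,b)\otimes Z$ via \Cref{Lan_Ptw}, which is precisely where the existence hypothesis enters in the paper as well. The one genuine deviation is in how you compute $\colim_J f^*(a_!Z)$: the paper applies \Cref{Cof_Quant} once more and evaluates the Yoneda extension, observing that $\widehat{a_!Z}$, as a composite of left Kan extensions along $\pt \to I \to \Psh(I)$, satisfies $(\widehat{a_!Z})(W) \simeq W(a)\otimes Z$ for every weight $W$, which at $W = |J_{(-)/}|$ gives $|J_{a/}|\otimes Z \simeq Z$; you instead bypass \Cref{Cof_Quant} in this direction entirely, identifying $f^*(a_!Z)$ with $j \mapsto \Map(a,f(j))\otimes Z$, recognizing this as straightened by the left fibration $J_{a/}\to J$, and computing the colimit fibrationally as $\colim_{J_{a/}}\underline{Z} \simeq |J_{a/}|\otimes Z$ (which is fine: the fiber inclusions are cofinal in the slices for a left fibration, and $J_{a/}\to |J_{a/}|$ is cofinal). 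Both routes are valid and of comparable length; yours is more hands-on, while the paper's keeps the weight-duality formalism in play and makes the converse visibly dual to \Cref{Duality_Exa}. Finally, the compatibility issue you flag --- that one must produce the canonical comparison map $\alpha$ and the canonical collapse map $|J_{a/}|\otimes Z \to Z$, not merely abstract equivalences --- is a legitimate point, but note that the paper leaves it implicit too; your suggestion to record it once as naturality of \Cref{Colim_Duality} in the weight variable is a sensible way to discharge it, and since $\pt_I$ is terminal in $\Psh(I)$ the relevant map of weights is at least unique, so nothing in the argument hinges on a choice.
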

\begin{proof}
    The constant weight on a point $\pt_I \colon I^\op \to \Spc$ is terminal in the category $\Psh(I)$, so there is a unique map $|J_{(-)/}| \to \pt_I$. This map induces a natural transformation of functors
    \[\begin{tikzcd}
    	{I^\op\times I} &&& \cC
    	\arrow[""{name=0, anchor=center, inner sep=0}, "{|J_{(-)/}| \otimes X}", curve={height=-18pt}, from=1-1, to=1-4]
    	\arrow[""{name=1, anchor=center, inner sep=0}, "{\pt \otimes X}"', curve={height=18pt}, from=1-1, to=1-4]
    	\arrow[between={0.2}{0.8}, Rightarrow, from=0, to=1]
    \end{tikzcd}\]
    which at every point $(a,b)\in I^\op \times I$ is given by the canonical map 
    \[
        X(b) \otimes |J_{a/}| \too X(b) \qin \cC.
    \]
    Thus, if all the spaces $|J_{a/}|$ are $\cC$-acyclic, then the above natural transformation is an isomorphism and hence, combined with \Cref{Cof_Quant}, we get
    \[
        \colim_J (X \circ f) \simeq
        \colim_I^{|J_{(-)/}|} X \simeq
        \colim_I^{\pt_I} X \simeq 
        \colim_I X.
    \]
    Conversely, assume $f$ is $\cC$-cofinal. For an object $a\in I$, viewed as a functor $a\colon \pt \to I$, and an object $X \in \cC$, viewed as a diagram $X \colon \pt \to \cC$, consider the diagram $a_!X \colon I \to \cC$. The pointwise formula for the left Kan extension (see \Cref{Lan_Ptw}) gives an isomorphism\footnote{This is in a sense dual to \Cref{Duality_Exa}.}
    \[
        (a_! X)(b) \simeq \Map(a,b) \otimes X \qin \cC.
    \]
    On the one hand, by cofinality of $f$ and writing $q\colon I \to \pt$, we have
    \[
        \colim_J f^* (a_! X) \simeq
        \colim_I a_! X \simeq 
        q_!a_! X \simeq 
        X.
    \]
    On the other hand, by \cref{Cof_Quant}, we have
    \[
        \colim_J f^* a_! X \simeq 
        \colim_I^{|J_{(-)/}|} a_! X \simeq 
        \big(\widehat{a_! X}\big)(|J_{(-)/}|).
    \]
    Now, $\widehat{a_! X}$, as the composition of two left Kan extensions along
    \[
        \pt \oto{\ a\ } I \oto{\ j\ } \Psh(I),
    \]
    is given by left Kan extension of $X$ along the composite map $\pt \to \Psh(I)$ that picks the representable functor $j(a) = \Map(-,a)$. Thus, for any $W \in \Psh(I)$ we have
    \[
        \big(\widehat{a_! X}\big)(W) \simeq
        W(a) \otimes X.
    \]
    Putting everything together we get 
    \[
        |J_{a/}| \otimes X \iso  X \qin \cC.
    \]
    Namely, $|J_{a/}|$ is $\cC$-acyclic for all $a\in I$.
\end{proof}

\section{Reduced symmetric algebras}

In this final section, we describe an application of \Cref{Local_Cof_Crit_Intro}, due to Betts and Dan-Cohen. Let $\cC$ be a presentably symmetric monoidal $\infty$-category. The free $\EE_\infty$-algebra on an object $X \in \cC$ is given by the \textit{symmetric algebra} construction
\[
    \Sym(X) :=
    \bigsqcup_{n\in \NN} X^{\otimes n}_{h\Sigma_n}
\]
and constitutes an object-wise formula for the left adjoint to the forgetful functor $\Alg_{\EE_\infty}(\cC) \to \cC$. Similarly, a pointed object $\one \to X$ in $\cC$ can be viewed as an $\EE_0$-algebra structure on $X$, and the left adjoint to the forgetful functor $\Alg_{\EE_\infty}(\cC) \to \Alg_{\EE_0}(\cC)$ is given by the \textit{reduced symmetric algebra} functor
\[
    \cl{\Sym} \colon \Alg_{\EE_0}(\cC) \too \Alg_{\EE_\infty}(\cC). 
\]

\begin{prop}[\cite{Ishai_Paper}]\label[prop]{Ishai}
    Let $\cC$ be a presentably symmetric monoidal $\infty$-category such that $B\Sigma_n$ is $\cC$-acyclic for all $n$ (e.g., rational stable or ordinary category). For every pointed object $\one \to X$ in $\cC$, we have an isomorphism,
    \[
        \cl{\Sym}(X) \ \simeq\
        \colim \big(
        \one \to X \to X^{\otimes 2}_{h\Sigma_2} \to X^{\otimes 3}_{h\Sigma_3} \to \dots \big)
        \qin \cC.
    \]
\end{prop}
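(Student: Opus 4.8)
The plan is to first realize $\cl{\Sym}(X)$ as a colimit indexed by the category $\mathrm{FinInj}$ of finite sets and injections, and then to collapse this colimit to the stated sequential one by means of \Cref{Local_Cof_Crit}; the acyclicity of the $B\Sigma_n$ enters only in the second step. \emph{Step 1 (a colimit over finite sets and injections).} Let $\mathrm{FinInj}$ denote the category whose objects are finite sets and whose morphisms are injections, and let $X^{\otimes(-)}\colon\mathrm{FinInj}\to\cC$ be the functor $S\mapsto X^{\otimes S}$, an injection $\iota\colon S\hookrightarrow T$ acting by filling the coordinates indexed by $T\smallsetminus\iota(S)$ with the basepoint $\one\to X$. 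I would first establish
\[
    \cl{\Sym}(X)\ \simeq\ \colim_{\mathrm{FinInj}}X^{\otimes(-)}\qin\cC .
\]
This is the operadic left Kan extension formula for the free $\EE_\infty$-algebra on the $\EE_0$-algebra $X$ along $\EE_0\to\EE_\infty$: the indexing category of active operations under the $\EE_0$-structure is precisely $\mathrm{FinInj}$, with the nullary operation $\one\to X$ supplying the functoriality in injections. This identification is valid in any presentably symmetric monoidal $\cC$, with no acyclicity hypothesis.

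\emph{Step 2 (reduction to the strata).} Stratify by cardinality: let $\mathrm{FinInj}_{=n}$ and $\mathrm{FinInj}_{\le n}$ be the full subcategories on sets of size $=n$ and $\le n$. Left Kan extending $X^{\otimes(-)}$ along the cardinality functor $c\colon\mathrm{FinInj}\to\NN$, whose oplax fiber $c_{/n}$ is $\mathrm{FinInj}_{\le n}$, the pointwise formula (\Cref{Lan_Ptw}) gives
\[
    \colim_{\mathrm{FinInj}}X^{\otimes(-)}\ \simeq\ \colim_{n\in\NN}\,\colim_{\mathrm{FinInj}_{\le n}}X^{\otimes(-)} .
\]
Since $\mathrm{FinInj}_{=n}\simeq B\Sigma_n$ and $X^{\otimes(-)}$ restricts there to the $\Sigma_n$-object $X^{\otimes n}$, so that $\colim_{\mathrm{FinInj}_{=n}}X^{\otimes(-)}\simeq X^{\otimes n}_{h\Sigma_n}$, it suffices to prove that the inclusion $\mathrm{FinInj}_{=n}\hookrightarrow\mathrm{FinInj}_{\le n}$ is $\cC$-cofinal.

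\emph{Step 3 (the lax fibers are acyclic).} For $a\in\mathrm{FinInj}_{\le n}$ of cardinality $k$, the lax fiber $(\mathrm{FinInj}_{=n})_{a/}$ is the groupoid of injections $a\hookrightarrow b$ with $|b|=n$ and bijections of $b$ under $a$ as morphisms. As $\Sigma_n$ acts transitively on such injections with stabilizer $\Sigma_{n-k}$, this groupoid is equivalent to $B\Sigma_{n-k}$, so
\[
    \big|(\mathrm{FinInj}_{=n})_{a/}\big|\ \simeq\ B\Sigma_{n-k} .
\]
By hypothesis $B\Sigma_{n-k}$ is $\cC$-acyclic, so \Cref{Local_Cof_Crit} shows $\mathrm{FinInj}_{=n}\hookrightarrow\mathrm{FinInj}_{\le n}$ is $\cC$-cofinal, whence $\colim_{\mathrm{FinInj}_{\le n}}X^{\otimes(-)}\simeq X^{\otimes n}_{h\Sigma_n}$.

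\emph{Step 4 (identifying the transition maps, and the main obstacle).} It remains to see that the tower $\colim_{\mathrm{FinInj}_{\le n}}X^{\otimes(-)}$ realizes the stated sequential diagram. Its transition maps are induced by the inclusions $\mathrm{FinInj}_{\le n}\hookrightarrow\mathrm{FinInj}_{\le n+1}$, and the structure morphisms $S\hookrightarrow S\sqcup\{\ast\}$ of $\mathrm{FinInj}_{\le n+1}$ (natural in $S\in\mathrm{FinInj}_{=n}$) factor the transition through $\mathrm{FinInj}_{=n+1}$, identifying it with the basepoint-insertion $X^{\otimes n}_{h\Sigma_n}\to X^{\otimes(n+1)}_{h\Sigma_{n+1}}$. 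Assembling Steps 1--4 yields the asserted formula. The main obstacle is Step 1, namely extracting the $\mathrm{FinInj}$-indexed colimit from the operadic left Kan extension; once it is in place the remaining steps are formal, the sole new ingredient being the acyclicity of the $B\Sigma_{n-k}$ feeding into \Cref{Local_Cof_Crit}.
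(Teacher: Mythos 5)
Your proposal is correct and follows essentially the same route as the paper's proof: the operadic left Kan extension formula $\cl{\Sym}(X)\simeq\colim_{S\in\Fin^\inj}X^{\otimes |S|}$, stratification by cardinality, $\cC$-cofinality of $B\Sigma_n \into \Fin^\inj_{\le n}$ via the lax fibers $B\Sigma_{n-|S|}$ and \Cref{Local_Cof_Crit}, and the same identification of the transition maps as basepoint insertion. The only (immaterial) difference is in your Step 2, where you justify the decomposition $\colim_{\Fin^\inj}\simeq\colim_{n\in\NN}\colim_{\Fin^\inj_{\le n}}$ by left Kan extending along the cardinality functor to $\NN$ and invoking \Cref{Lan_Ptw}, whereas the paper cites the filtration $\Fin^\inj\simeq\colim_n \Fin^\inj_{\le n}$ together with a reference for colimits over filtered colimits of categories.
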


For the convenience of the reader we sketch the argument for how this is derived from \Cref{Local_Cof_Crit_Intro}.

\begin{proof}[Proof sketch.]
    By the general yoga of operadic Kan extensions, for every map $f\colon \sP \to \sQ$ of $\infty$-operads, the forgetful functor 
    \[
        f^*\colon \Alg_\sQ(\cC) \too \Alg_\sP(\cC)
    \]
    admits a left adjoint 
    \[
        f_!\colon \Alg_\sP(\cC) \too \Alg_\sQ(\cC).
    \]
    A general formula for the corresponding left adjoint is derived in \cite[\S 3.1.3]{HA}, and a more accessible account can be found in \cite[Example 9.2]{chu2023free}. In our special case, $\sP = \EE_0$ and $\sQ = \EE_\infty$, this formula unpacks as follows. 
    Let $\Fin^\inj$ be the category of finite sets and injective morphisms with the disjoint union symmetric monoidal structure.
    An $\EE_0$-algebra $\one \to X$ in $\cC$ determines a symmetric monoidal functor
    \[
        F_X \colon \Fin^\inj \too \cC,
    \]
    which is characterized by sending a singleton $\{\star\}$ to $X$ and the unique map $\es \to \{\star\}$ to the structure map $\one \to X$.\footnote{The category $\Fin^\inj$ arises as $\EE_{0,\mathrm{act}}^\otimes$ serving as the \textit{symmetric monoidal envelope} of $\EE_0$.} We then have that $\cl{\Sym}(X)$ is simply the colimit of $F_X$ in $\cC$. Namely,
    \[
        \cl{\Sym}(X) \simeq 
        \colim_{S \in \Fin^\inj} \ X^{\otimes |S|} 
        \qin \cC.
    \]
    This colimit can be computed in steps. Let $\Fin^\inj_{\le n} \sseq \Fin^\inj$ be the full subcategory on sets of cardinality less than or equal to $n$. 
    Since 
    \[
        \Fin^\inj \simeq \colim\big(
            \Fin_{\le 0}^\inj \too 
            \Fin_{\le 1}^\inj \too
            \Fin_{\le 2}^\inj \too
            \dots 
        \big),
    \]
    we get by \cite[Example 2.5]{horev2017conjugates},
    \[
        \colim_{S\in \Fin^\inj} \ X^{\otimes |S|} = 
        \colim_{n\in \NN}
        \big(\colim_{S\in \Fin^\inj_{\le n}} \ X^{\otimes |S|}\big).
    \]
    Now, consider the inclusion $f\colon B\Sigma_n \into \Fin^\inj_{\le n}$ of the full subcategory on an $n$-element set. 
    The lax fiber over any $S$ in $\Fin^\inj_{\le n}$ is easily seen to be equivalent to $B\Sigma_{n-|S|}$. As we assumed these spaces are $\cC$-acyclic, we get by \cref{Local_Cof_Crit} that  $f$ is $\cC$-cofinal. Namely,
    \[
        \colim_{S\in \Fin^\inj_{\le n}} \ X^{\otimes |S|} \simeq
        X^{\otimes n}_{h\Sigma_n}.
    \]    
    Putting everything together we get the formula
    \[
        \cl{\Sym}(X) \simeq \colim
        \big(\one \to X \to X^{\otimes 2}_{h\Sigma_2} \to X^{\otimes 3}_{h\Sigma_3} \to \dots \big) \qin \cC.
    \]
\end{proof}

\begin{rem}
    Chasing through the proof, each map $X^{\otimes n-1}_{h\Sigma_{n-1}} \to X^{\otimes n}_{h\Sigma_{n}}$ in the sequential diagram above is induced by the $\Sigma_{n-1}$-equivariant map 
    \[
        X^{\otimes n-1} = X^{\otimes n-1}\otimes \one \too 
        X^{\otimes n-1}\otimes X = X^{\otimes n},
    \]
    by passing to $\Sigma_{n-1}$-orbits and composing with the canonical projection:
    \[
        X^{\otimes n-1}_{h\Sigma_{n-1}} \too 
        X^{\otimes n}_{h\Sigma_{n-1}} \too
        X^{\otimes n}_{h\Sigma_{n}}.
    \]
\end{rem}

We observe that the conclusion of \cref{Ishai} does not hold in every $\infty$-category $\cC$. 

\begin{example}
    Let $\cC = \Spc$ and consider $X = \pt$ pointed by the identity. The reduced free $\EE_\infty$-algebra on it is simply $\pt$ again, whereas the sequential colimit appearing in \Cref{Ishai} gives the non-contractible space $B\Sigma_\infty$.    
\end{example}

Incidentally, the Dold-Thom theorem says that for a nice pointed connected \textit{topological space} $X$, we have
\[
    \cl{\Sym}(X) \simeq \colim
    \big(\one \to X \to X^{2}_{\Sigma_2} \to X^{3}_{\Sigma_3} \to \dots \big)
    \qin \mathrm{Top}.
\]
Namely, we need to replace the \textit{homotopy} $\Sigma_n$-orbits by the \textit{strict} point-set $\Sigma_n$-orbits. In fact, this comes from the homotopy equivalence
\[
    X^{n}_{\Sigma_n} \simeq 
    \colim_{S\in \Fin^\inj_{\le n}} X^{|S|}.
\]
expressing the strict colimit in the ordinary category of topological spaces on the left-hand side as a colimit in the $\infty$-category of spaces on the right-hand side. The comparison map $X^{n}_{h\Sigma_n} \to X^{n}_{\Sigma_n}$ coming from the inclusion $B\Sigma_n \to \Fin^\inj_{\le n}$ is precisely the canonical map from the homotopy orbits to the strict orbits. 
Thus, in a precise sense, the Dold-Thom theorem is about the \textit{failure} of \Cref{Ishai} for $\cC = \Spc$.

\bibliographystyle{alpha}
\phantomsection\addcontentsline{toc}{section}{\refname}
\bibliography{four}

\end{document}